\theoremstyle{plain}
\newtheorem{theorem}{Theorem}[section]
\newtheorem{proposition}{Proposition}[section]
\newtheorem{corollary}{Corollary}[section]
\newtheorem{remark}{\bf Remark}[section]
\theoremstyle{definition}
\newtheorem{definition}{Definition}[section]
\renewcommand{\div}{\mathop\mathrm{div}}
\newcommand{\Tr}{\mathop\mathrm{Tr}}
\def\R{\mathbb R}
\def\({\left(}
\def\){\right)}
\def\Tr{\operatorname{Tr}}
\begin{document}
 \title[Lieb--Thirring and other
bounds for orthonormal systems]{Applications of the Lieb--Thirring
and other bounds for orthonormal systems in mathematical
hydrodynamics}
\author[A. Ilyin, A. Kostianko, and S. Zelik] {Alexei Ilyin${}^{1,2}$,
Anna Kostianko${}^{4,5}$,
and Sergey Zelik${}^{1,3,4}$}

\subjclass[2000]{35B40, 35B45, 35L70}

\keywords{Lieb--Thirring inequalities, Navier--Stokes equations, attractors, fractal dimension, alpha models}

\thanks{This work was supported by the Russian Science Foundation grant No.19-71-30004.
 The research of the first author is also
supported by Sirius University of Science and Technology
(project `Spectral and Functional Inequalities of Mathematical
Physics and Their Applications'). The second  author was  partially
supported  by the Leverhulme grant No. RPG-2021-072 (United
Kingdom).}

\email{ilyin@keldysh.ru}
\email{a.kostianko@imperial.ac.uk}
\email{s.zelik@surrey.ac.uk}
\address{${}^1$ Keldysh Institute of Applied Mathematics, Moscow, Russia}
\address{${}^2$ Sirius Mathematics Center, Sirius University of Science and Technology,
 Russia, 354349 Sochi, Olimpiyskiy ave. b.1}
\address{${}^3$ University of Surrey, Department of Mathematics, Guildford, GU2 7XH, United Kingdom.}
\address{${}^4$ \phantom{e}School of Mathematics and Statistics, Lanzhou University, Lanzhou\\ 730000,
P.R. China}
\address{${}^5$  Imperial College, London SW7 2AZ, United Kingdom}

\begin{abstract}
We discuss the   estimates for the $L^p$-norms of systems
of functions that are orthonormal in $L^2$ and $H^1$, respectively,
and their essential role in deriving good or even optimal bounds
for the dimension of global attractors for the classical Navier--Stokes equations
and for a class of  $\alpha$-models approximating them. New applications to interpolation inequalities
on the 2D torus are also given.
\end{abstract}

\maketitle

\setcounter{equation}{0}
\section{Introduction}\label{S:Intro}
The 2D Navier--Stokes system is probably one of the
widest known and popular example of an evolution dissipative PDE
possessing a global attractor in an appropriate phase space.
Furthermore, many concepts and ideas of the theory of
infinite dimensional dissipative dynamical systems have originated and have
been developed from this example (see, for instance,~\cite{B-V, T} and
the references therein).

The global attractor is a compact, strictly invariant and globally attracting
set in the phase space, and one of main achievements of the theory
was the proof that its Hausdorff and fractal dimension are finite.
Then followed exponential and afterwards  polynomial estimates of its dimension,
which have saturated (see \cite{B-V83, C-F85}) at the level of
\begin{equation}\label{1}
\dim\mathscr A\le \mathrm{const}\,G^2, \quad G:=\frac{\|f\||\Omega|}{\nu^2},
\end{equation}
where $\|f\|$ is the $L^2$-norm of the forcing term, $|\Omega|$ is the area
of the spatial domain,  $\nu$ is the viscosity coefficient (see \eqref{NSs}),
and the dimensionless number $G$ built of the physical parameters of the system
is called the Grashof number.

The idea to use  Lieb--Thirring inequalities \cite{LT} for $L^2$-orthonormal families  in the study of
attractors of the Navier--Stokes equations was first suggested
by D. Ruelle \cite{Ruelle} and some conjectures of \cite{Ruelle} have been
proved by E. Lieb  \cite{Lieb}. For the two dimensional Navier--Stokes
system in a bounded domain with non slip boundary conditions
R. Temam \cite{T85} using this technique obtained  upper bounds
for the  Hausdorff and fractal dimension of the  attractor  which are
linear with respect to the  Grashof number and
are probably optimal:
\begin{equation}\label{2}
\dim\mathscr A\le  \mathrm{const}\,G.
\end{equation}
At least in terms of the physical parameters this upper bound
stays  unchanged for almost four decades, no lower bounds for the
dimension in the case of the  Dirichlet boundary conditions  are available either.

On the other hand again  for more than four decades  the theory of
the Lieb--Thirring inequalities is still a very active and
dynamically developing area of functional analysis and mathematical
physics. A current state of the art of many aspects of the theory
is presented in \cite{lthbook}.

In \S\,\ref{S:LTLiYau} we formulate  the required Lieb--Thirring
inequality for divergence free $\mathbf{L}^2$-orthonormal vector
functions in two dimensions and also the relevant Li--Yau-type
lower bound for the eigenvalues of the Stokes operator. Then in
\S\,\ref{S:LTNS} we describe in reasonable detail the proof  of
upper bounds \eqref{1} and \eqref{2} and single out  the point,
where the Lieb--Thirring inequality plays the vital role in going
from one to the other.

While the first part of this work is essentially a brief review
of the role of the Lieb--Thirring inequalities in the Navier--Stokes theory,
 the second  part contains new results.  We turn here  from the classical
models in hydrodynamics to a class of their approximations in terms
of  $\alpha$-models. Alpha models became popular over the last
decades both in theory and in practice as subgrid scale models of
turbulence. One of the characteristic features of these models is
the smoothing of the velocity vector $u$ in certain parts of the
bilinear convective term by replacing it with $\bar u:=(1-\alpha\Delta)^{-1}u$,
where $\alpha=\alpha'L^2$, $L$ is the characteristic length,
 and $\alpha'$ is a small dimensionless parameter.

 We also observe that in certain cases
 the energy space is not necessarily $\mathbf{L}^2$. For instance,
in the Euler--Bardina model that we have been interested in
 the natural phase space is $\mathbf{H}^1$
with scalar product
\begin{equation}\label{scalalpha}
(u,v)_\alpha=(u,v)+\alpha(\nabla u,\nabla v).
\end{equation}
The corresponding global Lyapunov exponents
are also estimated in $\mathbf{H}^1$ and in this way
we are led to find bounds for
$\|\rho\|_{L^2}$, where
$$\rho(x):=\sum_{j=1}^n|v_j(x)|^2,
$$
where $\{v_j\}_{j=1}^n$ is an orthonormal family in $\mathbf{H}^1$
with respect scalar product~\eqref{scalalpha}. This type of inequalities
were discovered by E.~Lieb in~\cite{LiebJFA} and remarkably nicely fit
in the estimates we required in \cite{IZLap70, arxiv}
(namely, the $L^2$-bound for $\rho$), where
we have also given explicit expressions for the constants
on $\mathbb{T}^2$ and $\mathbb{T}^3$ to be able to write down explicitly the
estimate for fractal dimension of the global attractor.

The main result here (Theorem~\ref{Th:main}) gives explicit bounds
for the $L^p$-norm on the torus $\mathbb{T}^2$
of the function $\rho$ for all $1\le p<\infty$.

The one-function corollary  of this theorem is equivalent to the
interpolation inequality for $\varphi \in  \dot{H}^1(\mathbb{T}^2)$:
$$
\|\varphi\|_{L^q(\mathbb{T}^2)}\le\left(\frac1{4\pi}\right)^\frac{q-2}{2q}\left(\frac q2\right)^{1/2}
\|\varphi\|^{2/q}\|\nabla\varphi\|^{1-2/q},\qquad q\ge2,
$$
and the constant here should be compared with that in the corresponding inequality
in $\mathbb{R}^2$, see~\eqref{Gag-Nir-R2}.

In \S\,\ref{S:App} we prove the key inequality
for the 2D lattice sum
$$
I_p(m):=\frac{(p-1)m^{2(p-1)}}\pi\sum_{n\in{\mathbb Z}_0^2}\frac1{(m^2+|n|^2)^p}<1,
$$
which was previously proved for $p=2$ in \cite{IZLap70}.
It is easy to see that $I_p(\infty)=1$ so it suffices to
establish monotonicity of $I_p(m)$.
By using a special representation of $I_p(m)$ in terms of the Jacobi
theta function $\theta_3$ the required monotonicity is
proved by showing that
 $\frac d{dm}I_p(m)>0$. We point out that this approach
 works simultaneously for all $p>1$ (and with minor changes on
 $\mathbb{T}^3$ as well).

We finally observe that the monotonicity so obtained is a subtle
property of $I_p(m)$. There are quite a few examples of
similar lattice sums and series with respect to the eigenvalues
of the Laplacian  on the sphere \cite{BDZel-Edinb, IZ} where the
corresponding functions exhibit oscillations for $m$ not too large.

\setcounter{equation}{0}
\section{Lieb--Thirring and Li--Yau-type inequalities for divergence free
orthonormal families}\label{S:LTLiYau}

In this section formulate the Lieb--Thirring inequality
for $\mathbf{L}^2$-orthonormal families of divergence free
vector functions in 2D and the Li--Yau-type lower for the
eigenvalues of the Stokes operator.

\begin{theorem}\label{T:Stokes} {(See \cite{Il_Stokes}.)}
Let $\Omega$ be an arbitrary domain in $\mathbb{R}^d$
with finite volume $|\Omega|<~\infty$. Let a family of
vector functions $\{u_k\}_{k=1}^m\in \mathbf{H}^1_0(\Omega)$ be orthonormal,
and, further,  let  $\div u_k=0$, $k=1,\dots,m$. Then
\begin{equation}\label{lowerorth}
\sum_{k=1}^m\|\nabla u_k\|^2\,\ge\,
\frac d{2+d}\left(
\frac{(2\pi)^d}{\omega_d(d-1)|\Omega|}
\right)^{2/d}m^{1+2/d}\,.
\end{equation}
\end{theorem}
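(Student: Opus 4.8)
The plan is to adapt the Berezin--Li--Yau argument --- Fourier transform together with the bathtub (rearrangement) principle --- to the solenoidal setting, where the divergence-free constraint is what replaces the naive dimensional factor $d$ by $d-1$ and thereby yields the stated constant.

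\textbf{Reduction via Plancherel.} First I would extend each $u_k$ by zero to all of $\mathbb R^d$; since $u_k\in\mathbf H^1_0(\Omega)$ the extension lies in $\mathbf H^1(\mathbb R^d)$ and remains divergence free. Writing $\widehat u_k(\xi)=(2\pi)^{-d/2}\int_\Omega u_k(x)e^{-ix\cdot\xi}\,dx\in\mathbb C^d$ and $\rho(\xi):=\sum_{k=1}^m|\widehat u_k(\xi)|^2$, the Plancherel theorem gives $\int_{\mathbb R^d}\rho(\xi)\,d\xi=\sum_k\|u_k\|^2=m$ and $\int_{\mathbb R^d}|\xi|^2\rho(\xi)\,d\xi=\sum_k\|\nabla u_k\|^2$. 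So \eqref{lowerorth} becomes a lower bound for $\int|\xi|^2\rho$ subject to $\int\rho=m$ and a pointwise upper bound on $\rho$.

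\textbf{The pointwise bound (the crux).} For any unit vector $a\in\mathbb C^d$ one has $a\cdot\widehat u_k(\xi)=\langle u_k,\psi_{a,\xi}\rangle_{\mathbf L^2}$ with $\psi_{a,\xi}(x)=(2\pi)^{-d/2}\bar a\,e^{ix\cdot\xi}\mathbf 1_\Omega(x)$, and $\|\psi_{a,\xi}\|^2=(2\pi)^{-d}|\Omega|$; Bessel's inequality for the orthonormal family $\{u_k\}$ then gives $\sum_k|a\cdot\widehat u_k(\xi)|^2\le(2\pi)^{-d}|\Omega|$. Hence the positive semidefinite matrix $S(\xi):=\sum_{k}\widehat u_k(\xi)\,\widehat u_k(\xi)^{*}$ has every eigenvalue $\le(2\pi)^{-d}|\Omega|$. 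Now the divergence-free condition enters: $\xi\cdot\widehat u_k(\xi)=0$ for every $k$, so $\operatorname{ran}S(\xi)\subset\xi^{\perp}$ and $S(\xi)$ has at most $d-1$ nonzero eigenvalues. Therefore $\rho(\xi)=\operatorname{Tr}S(\xi)\le(d-1)(2\pi)^{-d}|\Omega|=:M$ for a.e. $\xi$. This is the step I expect to be the main obstacle: dropping the containment $\operatorname{ran}S(\xi)\subset\xi^{\perp}$ one only gets the factor $d$, and the resulting estimate is worse by the factor $(d/(d-1))^{2/d}$; the sharpness of \eqref{lowerorth} is entirely due to this observation.

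\textbf{Bathtub principle and the constant.} Finally I would invoke the bathtub principle: among all measurable $\rho$ with $0\le\rho\le M$ and $\int\rho=m$, the quantity $\int|\xi|^2\rho$ is minimized by $\rho^{*}=M\mathbf 1_{B_R}$, where $B_R$ is the centered ball with $|B_R|=\omega_d R^d=m/M$. Indeed, for any admissible $\rho$,
\[
\int|\xi|^2(\rho^{*}-\rho)\,d\xi\le R^2\Bigl(\int_{B_R}(M-\rho)-\int_{B_R^c}\rho\Bigr)=R^2\bigl(M|B_R|-\textstyle\int\rho\bigr)=0,
\]
using $|\xi|^2\le R^2$ on $B_R$, $|\xi|^2\ge R^2$ on $B_R^c$, and $M-\rho\ge0$, $\rho\ge0$. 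A direct computation gives $\int_{B_R}|\xi|^2\,d\xi=\tfrac{d\,\omega_d}{d+2}R^{d+2}$, and substituting $R^{d+2}=(m/(M\omega_d))^{1+2/d}$ yields
\[
\sum_{k=1}^m\|\nabla u_k\|^2=\int|\xi|^2\rho\,\ge\,M\!\int_{B_R}|\xi|^2\,d\xi=\frac{d}{d+2}(M\omega_d)^{-2/d}m^{1+2/d}=\frac{d}{2+d}\Bigl(\frac{(2\pi)^d}{\omega_d(d-1)|\Omega|}\Bigr)^{2/d}m^{1+2/d},
\]
which is \eqref{lowerorth}. The remaining points --- measurability of $\xi\mapsto S(\xi)$ and the validity of the Fourier/Plancherel identities for the zero-extended functions --- are routine.
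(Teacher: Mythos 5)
Your argument is correct: the Plancherel reduction, the Bessel-inequality bound on the eigenvalues of $S(\xi)$ combined with the rank-$(d-1)$ observation from $\xi\cdot\widehat u_k(\xi)=0$, and the bathtub minimization all check out and reproduce the stated constant exactly. The paper itself gives no proof of Theorem~\ref{T:Stokes} but only cites \cite{Il_Stokes}, and your Berezin--Li--Yau-type argument is essentially the one used there, so there is nothing to add.
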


If we take for $u_k$ the first $m$ eigenfunctions of the Stokes operator
\begin{equation}\label{Stokessmooth}
\aligned
&-\Delta u_k + \nabla p_k\,=\,\lambda_ku_k,\\
&\div u_k\,=\,0,\,\,\,u_k\vert_{\partial\Omega }\,=\,0,
\endaligned
\end{equation}
then the left-hand side in~\eqref{lowerorth} becomes $\sum_{k=1}^m\lambda_k$
and in view of the asymptotic formula
(see~\cite{Metiv} at least when $\partial\Omega$ is Lipschitz))
$$
\lambda_k\sim \left(
\frac{(2\pi)^d}{\omega_d(d-1)|\Omega|}
\right)^{2/d}k^{2/d}
$$
we see that the constant on the right-hand side in
\eqref{lowerorth} is sharp in the sense that it cannot be taken greater
rendering \eqref{lowerorth} valid for \emph{all} $m$.

In the 2D case of our main concern we obtain
\begin{equation}\label{Stokes2d}
\sum_{k=1}^m\|\nabla u_k\|^2\,\ge\,\frac{2\pi}{|\Omega|}m^2\quad\text{and}\quad\lambda_1\ge\frac{2\pi}{|\Omega|}\,.
\end{equation}

We also point out that as is shown in~\cite{Kelliher}
in the 2D case for all $k\ge1$
$$
\lambda_k>\lambda^{\mathrm{D}}_k,
$$
where $\lambda^{\mathrm{D}}_k$ are the eigenvalues of the Dirichlet Laplacian.

The next result is crucial in finding
good estimates for the dimension of attractors of the 2D
Navier--Stokes system.

\begin{theorem}\label{T:LT}
Let $\Omega\subseteq\mathbb{R}^2$ be an arbitrary domain. Let a family of
scalar functions $\{\varphi_k\}_{k=1}^m\in {H}^1_0(\Omega)$ be orthonormal
in $L^2(\Omega)$. Then
$$
\rho(x):=\sum_{k=1}^m|\varphi_k(x)|^2
$$
satisfies the inequality
\begin{equation}\label{LTscal}
\int_{\Omega}\rho(x)^2\,dx\le \mathrm{c_{LT}}\sum_{k=1}^m\|\nabla\varphi_k\|^2.
\end{equation}
Let now a family of divergence free vector functions
$\{u_k\}_{k=1}^m\in \mathbf{H}^1_0(\Omega)$,
 $\div u_k=0$, be orthonormal in $\mathbf{L}^2(\Omega)$. Then
$\rho(x):=\sum_{j=1}^m|u_k(x)|^2$ satisfies
\begin{equation}\label{LTvec}
\int_{\Omega}\rho(x)^2\,dx\le \vec{\mathrm{c}}_{\mathrm{LT}}\sum_{k=1}^m\|\nabla u_k\|^2,
\end{equation}
where
\begin{equation}\label{cc}
\vec{\mathrm{c}}_{\mathrm{LT}}\le\mathrm{c_{LT}}.
\end{equation}
\end{theorem}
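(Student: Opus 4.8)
The plan is to obtain \eqref{LTscal} from the classical Lieb--Thirring inequality and then to reduce \eqref{LTvec}--\eqref{cc} to it, the divergence--free constraint being what makes \eqref{cc} possible. For the scalar bound \eqref{LTscal} I would simply invoke the two--dimensional Lieb--Thirring inequality in its ``kinetic energy'' (dual) form, which is precisely \eqref{LTscal}; equivalently one derives it from the eigenvalue bound $\sum_k|\lambda_k(-\Delta-V)|\le L_{1,2}\int_\Omega V_+^2$ by Legendre duality (the Birman--Schwinger correspondence), testing with $V=t\rho$ and optimising in $t>0$. I would also record it in the density--matrix form that is actually needed below: for every operator $0\le\gamma\le\mathrm{Id}$ on $L^2(\Omega)$ with one--particle density $\rho_\gamma(x)$ one still has $\int_\Omega\rho_\gamma^2\le\mathrm{c_{LT}}\,\Tr(-\Delta\,\gamma)$, with the same constant.

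For the vector case, the cheap step is the obvious one. Writing $u_k=(u_k^1,u_k^2)$, let $\gamma$ be the orthogonal projection onto $\mathrm{span}\{u_k\}\subset\mathbf L^2(\Omega)$ and let $\gamma^{11},\gamma^{22}$ be its diagonal blocks on $L^2(\Omega)\oplus L^2(\Omega)$; these satisfy $0\le\gamma^{aa}\le\mathrm{Id}$, $\Tr\gamma^{11}+\Tr\gamma^{22}=m$, $\Tr(-\Delta\gamma^{11})+\Tr(-\Delta\gamma^{22})=\sum_k\|\nabla u_k\|^2$, and $\rho=\rho_{\gamma^{11}}+\rho_{\gamma^{22}}$. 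Applying the scalar density--matrix inequality to each block and using the triangle inequality in $L^2$ already gives \eqref{LTvec}, but only with $\vec{\mathrm c}_{\mathrm{LT}}\le 2\,\mathrm{c_{LT}}$, and this step uses nothing about $\div u_k=0$. The content of \eqref{cc} is exactly the removal of this factor $2$.

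To gain that factor I would pass to the vorticity representation, available because $d=2$ and $\div u_k=0$: there are stream functions $\psi_k$ with $u_k=\nabla^\perp\psi_k$, whence $\|\nabla u_k\|^2=\|\Rot u_k\|^2=\|\omega_k\|^2$ with $\omega_k=\Delta\psi_k$, while $\rho=\sum_k|\nabla^\perp\Delta^{-1}\omega_k|^2$ and $\{\omega_k\}$ is orthonormal in $\dot H^{-1}(\Omega)$. By the same Birman--Schwinger duality, \eqref{LTvec}--\eqref{cc} is equivalent to $\Tr\big((A-V)_-\big)\le\tfrac14\mathrm{c_{LT}}\int_\Omega V^2$ for every $V\ge0$, where $A$ is the Stokes operator, and the idea is to dominate the negative spectral trace of the $V$--perturbed Stokes operator by that of a \emph{single} scalar Dirichlet operator $-\Delta^{\mathrm D}-V$ and then apply the scalar bound. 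For this one uses that on divergence--free fields the Stokes quadratic form equals $\int(\Rot u)^2$, together with the eigenvalue comparison $\lambda_k>\lambda_k^{\mathrm D}$ from \S\ref{S:LTLiYau} (\cite{Kelliher}): the Stokes operator is, eigenvalue by eigenvalue, at least as positive as the scalar Dirichlet Laplacian, which is what should compensate the ``$L^2\oplus L^2$'' loss in the cheap step.

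The main obstacle is exactly this comparison: converting the eigenvalue ordering $\lambda_k>\lambda_k^{\mathrm D}$ into an inequality between $\Tr\big((A-V)_-\big)$ and $\Tr\big((-\Delta^{\mathrm D}-V)_-\big)$, given that $V$ multiplies vector fields on one side and scalars on the other and that the vorticity of an $\mathbf H^1_0$ field need not satisfy Dirichlet conditions. I expect this to be handled at the level of the Biot--Savart map: $\rho=\sum_k|\nabla^\perp\Delta^{-1}\omega_k|^2$ and $\sum_k|(-\Delta)^{-1/2}\omega_k|^2$ have the same integral and the same ``energy'' $\sum_k\|\omega_k\|^2$, and the point to be established is that the rotated Riesz--type operator $\nabla^\perp\Delta^{-1}(-\Delta)^{1/2}=\nabla^\perp(-\Delta)^{-1/2}$ does not increase the $L^2$ norm of the spectral density it produces. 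Granting that smoothing estimate, \eqref{cc}, and hence the whole theorem, follows.
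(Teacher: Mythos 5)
First, a point of reference: the paper itself does not prove Theorem~\ref{T:LT}. Inequality \eqref{LTscal} is quoted from \cite{LT,HLW,DLL,FHJN} and the comparison \eqref{cc} from \cite{Ch-I}, so your argument can only be judged against the argument of \cite{Ch-I}. Your scalar part is fine, and so is your ``cheap'' vector step (block decomposition of the projection $\gamma$, giving $\vec{\mathrm{c}}_{\mathrm{LT}}\le 2\,\mathrm{c_{LT}}$). But the entire content of \eqref{cc} is the removal of that factor $2$, and that step is not closed in your proposal. Neither of the two mechanisms you offer works as stated. (i) Kelliher's ordering $\lambda_k>\lambda_k^{\mathrm D}$ compares the \emph{unperturbed} Stokes and Dirichlet spectra; it does not convert into $\Tr(A-V)_-\le\Tr(-\Delta^{\mathrm D}-V)_-$, because $V$ multiplies two-component fields on one side and scalars on the other and there is no intertwining carrying one quadratic form into the other. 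What min--max does give for free is only that restricting to the divergence-free subspace is dominated by the componentwise Dirichlet Laplacian on $\mathbf L^2$, i.e.\ $\Tr(A-V)_-\le 2\,\Tr(-\Delta^{\mathrm D}-V)_-$ --- the same factor $2$ again. (ii) The ``smoothing estimate'' you grant yourself at the end --- that the isometry $\nabla^\perp(-\Delta)^{-1/2}$ does not increase the $L^2$ norm of the density $\sum_k|\cdot|^2$ --- is precisely the assertion to be proved, and in this pointwise-density form it is far from obvious (Riesz-type transforms do not act monotonically on $|w(x)|$).

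The mechanism that actually cancels the factor $2$ (and this is what \cite{Ch-I} exploits) lives on the dual/Birman--Schwinger side, not on the density side. After extending by zero --- which preserves both membership in $\mathbf H^1$ and the constraint $\div u_k=0$, so one may take $\Omega=\R^2$ --- the relevant object is the Leray-projected resolvent $\mathrm P(-\Delta+E)^{-1}\mathrm P$, with matrix symbol $(|\xi|^2+E)^{-1}\bigl(I-\xi\otimes\xi/|\xi|^2\bigr)$. The angular average of $\xi\otimes\xi/|\xi|^2$ is $\tfrac12 I$, so each diagonal entry of the kernel on the diagonal $x=y$ is one \emph{half} of the scalar Green's function, and the $2\times2$ matrix trace equals exactly one scalar copy: the divergence-free projection discards half of the phase space in each component, which is what compensates having two components. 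Feeding this into the standard trace computation (exactly as in the proof of Theorem~\ref{Th:main} in \S\,\ref{S:alpha}, with $\mathrm P$ in place of $\Pi$) yields \eqref{LTvec} with $\vec{\mathrm c}_{\mathrm{LT}}\le\mathrm{c_{LT}}$. So your instinct that the constraint must enter through the symbol of the Biot--Savart/Riesz operator is correct, but it has to be used through the angular average of the Green's matrix rather than through an eigenvalue comparison or a pointwise density bound; as written, the decisive step of your proof is missing.
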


The constant $\mathrm{c_{LT}}$ is bounded from below by its `classical'
value, which is $1/(2\pi)$, and it is now customary~\cite{lthbook} to write estimates
for it in the form
$$
\mathrm{c_{LT}}\le R\cdot\frac1{2\pi}\,.
$$
Inequality \eqref{LTscal} was originally proved in~\cite{LT} with
$R=3\pi$, followed by significant improvements in~\cite{HLW}, $R=2$,
and in~\cite{DLL}, $R=\pi/\sqrt{3}=1.8138\dots$. The best to date estimate
obtained in \cite{FHJN} is
$$
R=1.456\dots\,.
$$

Finally, it was shown in~\cite{Ch-I} that in two dimensions the constant in the Lieb--Thirring
inequality does not increase in going over from the scalar case to the
divergence free vector case.

\setcounter{equation}{0}
\section{Lieb--Thirring inequalities and attractors for
Navier--Stokes equations}\label{S:LTNS}

We now consider  the two-dimen\-sional Navier--Stokes system
\begin{equation}\label{NSs}
\aligned
&\partial_tu\,+\,\sum_{i=1}^2u^i\partial_iu\,=\,\nu\Delta \,u\,-\,
\nabla\,p\,+\,f,\\
&\operatorname{div} u\,=\,0,\quad
u\vert_{\partial\Omega}\,=\,0,\quad u(0)\,=\,u_0,\qquad\Omega\Subset\R^2
\endaligned
\end{equation}
in a domain $\Omega$ with finite area $|\Omega|<\infty$ and with
Dirichlet boundary conditions for the velocity vector $u$.

We denote by  $\mathrm{P}$ the Helmholtz--Leray orthogonal projection in $\mathbf{L}_2(\Omega )$ onto
the Hilbert space $H$ which is the closure in  $\mathbf{L}_2(\Omega )$ of the set
of smooth solenoidal vector functions with compact supports in $\Omega$.
Applying $\mathrm{P}$ and thereby excluding the pressure $p$ we obtain
the evolution equation in $H$
\begin{equation}\label{NSeq}
\partial_t u\,+\,\nu A\,u\,+\,B(u,u)\,=\,f,\quad u(0)=u_0,
\end{equation}
where $A\,=\,-\mathrm{P}\Delta$ is the Stokes operator with eigenvalues
$\lambda_1\le\lambda_2\le\dots$ and $B(u,v)\,=\,\mathrm{P}\bigl(\sum_{i=1}^2u^i\partial_iv\bigr)$
is the bilinear operator satisfying the fundamental orthogonality relation
\begin{equation}\label{Borth}
(B(u,v),v)=0.
\end{equation}

The equation \eqref{NSeq} has a unique solution in $H$, so that the  the
solution semigroup $S(t):H\to H$, $S(t)u_0=u(t)$ of continuous
 operators is well-defined (see, for instance~\cite{Temam1}). The following two a priori estimates are essential in the proof.
Taking the scalar product of \eqref{NSeq} with $u$ and using \eqref{Borth}
we obtain
$$
\aligned
&\partial_t\|u\|^2\,+\,2\nu\|\nabla u\|^2\,\le\,2\|f\|_{-1}\|\nabla u\|\,\le\,
\nu\|\nabla u\|^2\,+\,\nu^{-1}\|f\|_{-1}^2\,\le\\
&\qquad\le\,\nu\|\nabla u\|^2\,+\,(\lambda_1\nu )^{-1}\|f\|^2,
\endaligned
$$
which gives
\begin{equation}\label{2ap}
\aligned
&\partial_t\|u\|^2\,+\,\nu\|\nabla u\|^2\,\le\,(\lambda_1\nu )^{-1}\|f\|^2,\\
&\partial_t\|u\|^2\,+\,\nu\lambda_1\|u\|^2\,\le\,(\lambda_1\nu )^{-1}\|f\|^2.
\endaligned
\end{equation}
Integrating the first inequality~\eqref{2ap} in time we obtain
\begin{equation}\label{enstr-est}
\limsup_{t\to\infty}\sup_{u_0\in\mathscr{A}}
\frac 1t\int_0^t\|\nabla u(\tau)\|^2d\tau\,
\le\,\frac{\|f\|^2}{\lambda_1\nu^2}\,.
\end{equation}
It follows from the second inequality in~\eqref{2ap} that
the ball $B_0$ in $H$ of radius $2(\nu\lambda_1)^{-1}\|f\|$ is an absorbing set for
the semigroup $S(t)$. Furthermore, the set $B_1:=S(1)B_0$ is bounded
in $\mathbf{H}^1_0(\Omega)\cap H$ (see \cite{Lad92}) and therefore compact in $H$.
Hence, the $\omega$-limit set in $H$ of the set $B_1$ is well defined.
This set is the global attractor of the Navier--Stokes system in the phase space $H$.

\begin{definition}\label{Def:Attractor} A set $\mathscr A\subset H$ is a  global attractor of the
semigroup $S(t)$ of continuous operators acting in a Banach space $H$ if
\par
1) $\mathscr A$ is a compact set in $H$;
\par
2) $\mathscr A$ is strictly invariant, i.e., $S(t)\mathscr A=\mathscr A$;
\par
3) $\mathscr A$ attracts the images of all bounded sets in  $H$, i.e. for every bounded
set $B\subset H$ and every neighbourhood $\mathcal O(\mathscr A)$ of the
attractor  there exists $T=T(\mathcal O, B)$ such that
 $
 S(t)B\subset\mathcal O(\mathscr A) \ \text{for all} \ t\ge T.
 $
\end{definition}

Next we consider the Navier--Stokes system linearized on the solution $u(t)$
lying on the attractor and parameterized by the initial point $u_0$:
\begin{equation}\label{var-eq}
\partial_tU=-\nu AU-
B(U,u(t))-B(u(t),U)=:{\mathcal L}(t,u_0)U, \qquad U(0)=\xi.
\end{equation}

We define and estimate the numbers $q(n)$, that is, the sums of the first $n$
global Lyapunov exponents:
\begin{equation}\label{def-q(m)}
q(n):=\limsup_{t\to\infty}\ \sup_{u_0\in {\mathscr A}}\ \
\frac{1}t
\int_0^t \sup_{\{v_j\}_{j=1}^n}\sum_{j=1}^n\bigl({\mathcal L}(\tau,u_0)v_j,v_j\bigr)d\tau,
\end{equation}
where $\{v_j\}_{j=1}^m\in \mathbf{H}^1_0(\Omega)\cap\{\div v=0\}$
is and arbitrary divergence free  $\mathbf{L}^2$-orthonormal system of dimension~$n$
\cite{B-V, C-F85, T}.

The numbers $q(n)$ control  the expansion or contraction
of the $n$-dimensional volumes transported by the variational equation
along the solution lying on the attractor and their role
in the dimension estimates is crucial
(see \cite{B-V, C-F85, T} and  \cite{Ch-I2001, Ch-I} for the Hausdorff and fractal dimension, respectively).
\begin{theorem}\label{T:frac}
Let for an integer $n>0$ $q(n)\ge0$ and $q(n+1)<0$.
Then both the Hausdorff and the fractal dimensions
of $\mathscr A$ satisfy
$$
\dim \mathscr A\le n_{\mathrm{L}}:=n+\frac{q(n)}{q(n)-q(n+1)}\,.
$$
\end{theorem}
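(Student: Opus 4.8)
The plan is to deduce this from the abstract volume-contraction criterion of Douady--Oesterl\'e and Constantin--Foias (for the Hausdorff dimension), together with its Chepyzhov--Ilyin refinement (for the fractal dimension), which are precisely the results cited immediately before the statement. For $t>0$ let $\omega_d(S(t))$ denote the supremum over $u_0\in\mathscr A$ of the $d$-dimensional volume contraction coefficient of the (quasi-)differential $DS(t,u_0)$: for integer $d$ it is the supremum over $\mathbf L^2$-orthonormal divergence-free systems $\{v_j\}_{j=1}^d$ of the $d$-dimensional volume of the image $DS(t,u_0)[v_1,\dots,v_d]$, and for non-integer $d=n+s$, $0<s<1$, it is defined by the interpolation $\omega_{n+s}:=\omega_n^{1-s}\omega_{n+1}^{s}$. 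The abstract theorem then reads: if $\omega_d(S(t))<1$ for all sufficiently large $t$, then $\dim_H\mathscr A\le d$, and under the same hypothesis the Chepyzhov--Ilyin argument yields $\dim_f\mathscr A\le d$ as well. Hence it suffices to prove that $\omega_d(S(t))<1$ for $t$ large and every $d>n_{\mathrm L}$.

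First I would identify $DS(t,u_0)$ with the solution operator $U(0)\mapsto U(t)$ of the variational equation \eqref{var-eq}; this is the standard uniform quasidifferentiability of the 2D Navier--Stokes semigroup on the absorbing ball, which legitimizes the linearization and makes all quantities entering \eqref{def-q(m)} well defined. By the Liouville-type trace formula for \eqref{var-eq}, the logarithm of the $n$-dimensional volume contraction along the trajectory through $u_0$ equals $\int_0^t\mathrm{Tr}\bigl(Q_n(\tau)\,\mathcal L(\tau,u_0)\,Q_n(\tau)\bigr)\,d\tau$, where $Q_n(\tau)$ is the orthogonal projection onto the $n$-dimensional subspace spanned by the transported vectors. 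Estimating this trace by the supremum over all $\mathbf L^2$-orthonormal divergence-free systems $\{v_j\}_{j=1}^n$ and passing to $\limsup_{t\to\infty}\frac1t$ gives, by the very definition \eqref{def-q(m)},
$$
\limsup_{t\to\infty}\frac1t\log\omega_n(S(t))\le q(n)\qquad\text{for every integer }n.
$$

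Next I would combine the interpolation definition of $\omega_{n+s}$ with the last display to obtain
$$
\limsup_{t\to\infty}\frac1t\log\omega_{n+s}(S(t))\le(1-s)\,q(n)+s\,q(n+1)=:\bar q(n+s),
$$
so that $\bar q$ is just the piecewise-linear interpolation of the sequence $q(\cdot)$. Under the hypothesis $q(n)\ge0>q(n+1)$ the affine function $s\mapsto\bar q(n+s)$ is strictly decreasing on $[0,1]$ and vanishes exactly at $s_\ast=q(n)/(q(n)-q(n+1))$, i.e.\ at $d=n_{\mathrm L}$. Therefore for every $d\in(n_{\mathrm L},n+1)$ one has $\bar q(d)<0$, hence $\omega_d(S(t))<1$ for all large $t$, and the abstract criterion gives $\dim_H\mathscr A\le d$ and $\dim_f\mathscr A\le d$; letting $d\downarrow n_{\mathrm L}$ completes the proof.

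The main obstacle, and the only point that genuinely uses the hydrodynamics rather than soft dynamical-systems arguments, is the uniform quasidifferentiability of $S(t)$ on $\mathscr A$ together with the integrability needed to make the trace formula and the passage to $\limsup\frac1t$ rigorous; for the 2D Navier--Stokes system this rests on the a priori enstrophy bound \eqref{enstr-est} (which also guarantees $q(n)<\infty$) and on the parabolic smoothing $B_1=S(1)B_0\subset\mathbf H^1_0(\Omega)\cap H$, both already recorded above. Everything else --- the interpolation defining $\omega_d$ for non-integer $d$, the piecewise-linearity of $\bar q$, and the reduction to the Douady--Oesterl\'e/Constantin--Foias and Chepyzhov--Ilyin theorems --- is standard and can be quoted from \cite{B-V, C-F85, T, Ch-I2001, Ch-I}.
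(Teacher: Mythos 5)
Your proposal is correct and follows exactly the route the paper itself relies on: the paper states Theorem~\ref{T:frac} without proof, citing \cite{B-V, C-F85, T} for the Hausdorff part and \cite{Ch-I2001, Ch-I} for the fractal part, and your reconstruction (Liouville trace formula, interpolation of $\omega_d$ for non-integer $d$, the Douady--Oesterl\'e/Constantin--Foias criterion, and the Chepyzhov--Ilyin refinement) is precisely the standard argument contained in those references. No gaps; the one genuinely analytic ingredient you flag --- uniform quasidifferentiability and the enstrophy bound making $q(n)$ finite --- is indeed the only non-soft step.
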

\begin{remark}\label{R:conc}
{\rm
If the function $q$ viewed as a function of a continuous
variable is concave (at least near $n$), then it is
geometrically  clear that $n_{\mathrm{L}}\le n^*$,
where $q(n^*)=0$.
}
\end{remark}

Turning to estimating the numbers $q(n)$ we
integrate by parts and
using the key orthogonality relation \eqref{Borth} we obtain
\begin{equation}\label{Trace1}
\aligned
&\sum_{j=1}^n({\mathcal L} (t,u_0)v_j,v_j)\,=\,
-\nu\sum_{j=1}^n\|\nabla v_j\|^2\,-\,\int\sum_{j=1}^n
\sum_{k,i=1}^2
v_j^k\partial_ku^iv_j^idx\,\le\\
-&\nu\sum_{j=1}^n\|\nabla v_j\|^2\,+\, 2^{-1/2}
\int\rho (x)|\nabla u(t,x)|\,dx\,\le\\
-&\nu\sum_{j=1}^n\|\nabla v_j\|^2\,+\,
2^{-1/2}\|\rho\|\|\nabla u\|\,,
\endaligned
\end{equation}
where we used the pointwise inequality (see, \cite{arxiv,Lieb})
$$\biggl|\sum_{k,i=1}^2
v^k\partial_ku^iv^i\biggr|=
|\nabla u\,v\cdot v|\le
2^{-1/2}|\nabla u||v|^2,$$
and where
$$
\rho(x):=\sum_{j=1}^n|v_j(x)|^2.
$$

Prior to the use of the Lieb--Thirring inequalities in the context
of the attractors for the Navier--Stokes equations the
 function $\rho$ was estimated (in a non-optimal way)  by the Ladyzhenskaya inequality
\begin{equation}\label{Ladineq}
\|u\|^4_{L^4}\le \mathrm{c}_{\mathrm{Lad}}\|u\|^2\|\nabla u\|^2,
\end{equation}
 where
 $$
 \mathrm{c}_{\mathrm{Lad}}\le\frac{16}{27\pi},\qquad
 \mathrm{c}_{\mathrm{Lad}}=\frac{1}{\pi\cdot 1.8622\dots}\,
 $$
see, respectively,~\eqref{Gag-Nir-R2}, and \cite{Wein83}, where
 the sharp value of the  constant was found numerically, by calculating the  norm of the ground state solution of the corresponding Euler--Lagrange equation.

Using \eqref{Ladineq} and the fact that the $v_j$'s are normalized (but not using  orthogonality)
we find that
$$
\rho(x)^2=\sum_{i,j=1}^n|v_i(x)^2|^2|v_j(x)^2|^2\le
\frac12\sum_{i,j=1}^n\left(|v_i(x)|^4+|v_j(x)|^4\right)=
n\sum_{j=1}^n|v_j(x)|^4,
$$
so that
\begin{equation}\label{withoutLT}
\|\rho\|^2=\int_{\Omega}\rho(x)^2\,dx\le
n \,\mathrm{c}_{\mathrm{Lad}}\sum_{j=1}^n\|\nabla v_j\|^2.
\end{equation}
Substituting this into~\eqref{Trace1} and  splitting
the second term there accordingly, we obtain
$$
\sum_{j=1}^n({\mathcal L} (t,u_0)v_j,v_j)\le
-\frac\nu 2\sum_{j=1}^n\|\nabla v_j\|^2+
\frac{n\mathrm{c}_{\mathrm{Lad}}}{4\nu}\|\nabla u(t)\|^2.
$$
It remains to use the lower bound for the sums of eigenvalues of the Stokes
operator \eqref{Stokes2d} including the lower bound $\lambda_1\ge2\pi/|\Omega|$
and  estimate~\eqref{enstr-est} for the solutions lying on the attractor.
This finally gives
\begin{equation}\label{qn1}
q(n)\le
-\frac{\nu\pi }{|\Omega|}n^2+
\frac{ \mathrm{c}_{\mathrm{Lad}}\|f\|^2|\Omega|}{8\pi\nu^3}n\,,
\end{equation}
so that $q(n^*)=0$ for
$$
n^*=\frac{\mathrm{c}_{\mathrm{Lad}}}{8\pi^2}G^2, \qquad
G=\frac{\|f\||\Omega|}{\nu^2}\,,
$$
and the number $n^*$ is an upper bound both for the Hausdorff and
fractal dimension of the global attractor $\mathscr A$:
$$
\dim \mathscr A\le\frac{\mathrm{c}_{\mathrm{Lad}}}{8\pi^2}G^2\,.
$$
Up to explicit constants this is what the situation in this area looked like
before the use the Lieb--Thirring inequalities, see, for instance,
\cite{B-V83}, \cite{C-F85}.

The Lieb--Thirring bound for orthonormal families gives an optimal
bound for the function $\rho$ replacing \eqref{withoutLT} by the inequality with
constant  independent of the size $n$ of the orthonormal family:
\begin{equation}\label{orthLT}
\|\rho\|^2=\int_{\Omega}\rho(x)^2\,dx\le
\mathrm{c}_{\mathrm{LT}}\sum_{j=1}^n\|\nabla v_j\|^2.
\end{equation}
Replacing \eqref{withoutLT} with \eqref{orthLT} in \eqref{Trace1}
and arguing as before we find that
\begin{equation}\label{Trace2}
\aligned
&\sum_{j=1}^n({\mathcal L} (t,u_0)v_j,v_j)\,\le\,
-\nu\sum_{j=1}^n\|\nabla v_j\|^2\,+\,
2^{-1/2}\|\rho\|\|\nabla u\|\,\le\\
-&\nu\sum_{j=1}^n\|\nabla v_j\|^2\,+\,
2^{-1/2}
\biggl(\mathrm{c}_{\mathrm{LT}}
\sum_{j=1}^n\|\nabla v_j\|^2\biggr)^{1/2}
\|\nabla u(t)\|\,\le\\
-&\frac\nu2\sum_{j=1}^n\|\nabla v_j\|^2\,+\,
\frac{\mathrm{c}_\mathrm{LT}}{4\nu}\|\nabla u(t)\|^2\,\le\,
-\frac{\nu \pi}{|\Omega|}n^2\,+\,
\frac{c_\mathrm{LT}}{4\nu}\|\nabla u(t)\|^2\,,
\endaligned
\end{equation}
so that~\eqref{qn1} goes over to
\begin{equation}\label{qn2}
q(n)\le
-\frac{\nu\pi }{|\Omega|}n^2+
\frac{ \mathrm{c}_{\mathrm{LT}}\|f\|^2|\Omega|}{8\pi\nu^3}\,,
\end{equation}
which gives the estimate of the dimension that is linear with respect
to the dimensionless number $G$.
\begin{theorem}\label{T:dimNS}
The Hausdorff and the fractal dimension of the global attractor $\mathscr A$
of the Navier--Stokes system in a domain $\Omega\subset\mathbb{R}^2$ with
finite area satisfy the estimate
\begin{equation}\label{dimNS}
\dim \mathscr A\le\frac{\mathrm{c}_{\mathrm{LT}}^{1/2}}{2\sqrt{2}\pi}G\,,
\qquad G=\frac{\|f\||\Omega|}{\nu^2}\,.
\end{equation}
\end{theorem}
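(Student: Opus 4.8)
Everything substantive is already in place: estimate \eqref{qn2} assembles the orthonormal Lieb--Thirring bound \eqref{orthLT}, the Li--Yau-type lower bound $\sum_j\|\nabla v_j\|^2\ge(2\pi/|\Omega|)n^2$ from \eqref{Stokes2d}, and the enstrophy estimate \eqref{enstr-est} into a single quadratic-in-$n$ bound for $q(n)$. The plan is simply to run the scheme of Theorem \ref{T:frac} and Remark \ref{R:conc} with this bound. Set
$$
\Phi(s):=-\frac{\nu\pi}{|\Omega|}\,s^2+\frac{\mathrm{c}_{\mathrm{LT}}\|f\|^2|\Omega|}{8\pi\nu^3},
$$
so that \eqref{qn2} reads $q(n)\le\Phi(n)$ for all integers $n\ge1$. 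The function $\Phi$ is concave (a downward parabola), strictly decreasing on $[0,\infty)$, and has $\Phi(0)>0$; hence it possesses a unique positive root $n^*$, and solving $\Phi(n^*)=0$ yields
$$
n^*=\frac{\mathrm{c}_{\mathrm{LT}}^{1/2}\|f\|\,|\Omega|}{2\sqrt2\,\pi\,\nu^2}=\frac{\mathrm{c}_{\mathrm{LT}}^{1/2}}{2\sqrt2\,\pi}\,G .
$$

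Next I would apply Theorem \ref{T:frac}. If $q(1)<0$ the attractor has dimension zero (a standard fact) and the bound holds trivially, so assume $q(n)\ge0$ for some $n$; since $q(n)\le\Phi(n)\to-\infty$ there is then an integer $n\ge1$ with $q(n)\ge0>q(n+1)$, and Theorem \ref{T:frac} gives $\dim\mathscr A\le n_{\mathrm L}=n+q(n)/(q(n)-q(n+1))$. Because the majorant $\Phi$ is concave, Remark \ref{R:conc} applies and yields $n_{\mathrm L}\le n^*$; concretely, $n_{\mathrm L}$ is the zero of the affine function $\ell$ through $(n,q(n))$ and $(n+1,q(n+1))$ and lies in $[n,n+1)$, on which interval $\ell\le\ell_\Phi\le\Phi$ (the first inequality since $\ell,\ell_\Phi$ are affine and $q\le\Phi$ at both endpoints, where $\ell_\Phi$ is the affine interpolant of $\Phi$; the second by concavity), whence $0=\ell(n_{\mathrm L})\le\Phi(n_{\mathrm L})$ and therefore $n_{\mathrm L}\le n^*$ since $\Phi$ is strictly decreasing with $\Phi(n^*)=0$. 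Combining, $\dim\mathscr A\le n^*=\mathrm{c}_{\mathrm{LT}}^{1/2}(2\sqrt2\,\pi)^{-1}G$, i.e.\ \eqref{dimNS}.

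I do not expect any genuine obstacle: the analytic heart is the trace computation \eqref{Trace2}, where using the orthonormal Lieb--Thirring inequality instead of the Ladyzhenskaya inequality removes the parasitic factor $n$ that appeared in \eqref{qn1}, and that step is already done. The only things needing care are the routine bookkeeping --- the reduction to the transition integer $n$ (and the degenerate case where no such $n$ exists), the concavity/affine-interpolation argument behind $n_{\mathrm L}\le n^*$, and checking that the constant extracted from $\Phi(n^*)=0$ really is $\mathrm{c}_{\mathrm{LT}}^{1/2}/(2\sqrt2\,\pi)$.
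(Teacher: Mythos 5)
Your proposal is correct and follows essentially the same route as the paper: the theorem is obtained by reading off the positive root $n^*$ of the concave quadratic majorant in \eqref{qn2} and invoking Theorem \ref{T:frac} together with Remark \ref{R:conc}, exactly as you do. Your explicit verification of the affine-interpolation step behind $n_{\mathrm L}\le n^*$ and of the degenerate case $q(1)<0$ merely fills in details the paper leaves as ``geometrically clear.''
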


\begin{remark}
{\rm
One can avoid  the Li--Yau-type lower bounds for the Stokes operator
by using instead that
$$
n^2=\left(\int_{\Omega}\rho(x)\,dx\right)^2\le|\Omega|\|\rho\|^2.
$$
Then we obtain
$$
\aligned
&\sum_{j=1}^n({\mathcal L} (t,u_0)v_j,v_j)\,\le\,
-\frac{\nu}{\mathrm{c}_{\mathrm{LT}}}\|\rho\|^2\,+\,
2^{-1/2}\|\rho\|\|\nabla u\|\,\le\\
&-\frac{\nu}{2\mathrm{c}_{\mathrm{LT}}}\|\rho\|^2\,+\,
\frac{\mathrm{c}_{\mathrm{LT}}\|\nabla u(t)\|^2}{4\nu}\,\le
-\frac{\nu}{2\mathrm{c}_{\mathrm{LT}}|\Omega|}n^2\,+\,
\frac{\mathrm{c}_{\mathrm{LT}}\|\nabla u(t)\|^2}{4\nu}\,.
\endaligned
$$
Using again \eqref{enstr-est} we obtain
$$
\dim \mathscr A\le\frac{\mathrm{c}_{\mathrm{LT}}}{2\sqrt{\pi}}G\,,
\qquad G=\frac{\|f\||\Omega|}{\nu^2}\,.
$$
However, the factor of $G$ in \eqref{dimNS} is smaller, since
$\mathrm{c_{LT}}\ge 1/(2\pi)$.
}
\end{remark}

 \setcounter{equation}{0}
\section{$L^p$-inequalities for families of functions with
orthonormal derivatives and applications}\label{S:alpha}

We have seen in seen in \S\,\ref{S:LTLiYau} and \S\,\ref{S:LTNS}
that the Lieb--Thirring inequality for $L^2$-orthonormal families
is essential for finding good estimates for the attractor
dimension of the 2D Navier--Stokes system in a bounded domain with Dirichlet
boundary conditions.

In the 3D case the situation is drastically different,
since the global well-posedness
remains a mystery and therefore  inspires a comprehensive study of various
 modifications/regularizations of the initial Navier-Stokes/Euler equations
  (such as various $\alpha$ model, hyperviscous Navier-Stokes equations,
  regularizations via $p$-Laplacian, etc.), many of which have a strong
   physical background and are of independent interest, both in practice and theory,
    see e.g.
   \cite{{BFR80}, Camassa,HLT10, Larios, Lopes,OT07} and the references therein.

   In \cite{arxiv, IZLap70,  MZ} the authors have recently studied
the following regularized damped Euler system:
\begin{equation}\label{DEalpha}
\left\{
  \begin{array}{ll}
    \partial_t u+(\bar u,\nabla)\bar u+\gamma u+\nabla p=g,\  \  \\
    \operatorname{div} \bar u=0,\quad u(0)=u_0.
  \end{array}
\right.
\end{equation}
with  forcing $g$ and  Ekman damping term $\gamma u$, $\gamma>0$.
The damping term $\gamma  u$ makes the system dissipative and is
important in  various geophysical models. System \eqref{DEalpha}
(at least in the conservative case $\gamma=0$) is often referred to
as the  simplified Bardina subgrid scale model of turbulence, see
\cite{BFR80} for the derivation of the model and further
discussion.

The system is studied for $d=2,3$\newline
1) on the torus $\mathbb{T}^d=[0,L]^d$ with
      standard zero mean condition;\newline
2) in  $\Omega\subseteq\mathbb{R}^d$;\newline
3) on the sphere $\mathbb{S}^2$ or in a domain on it  $\Omega\subseteq\mathbb{S}^2$; \newline
4) if $\Omega\varsubsetneq\mathbb{R}^d$ or $\Omega\varsubsetneq\mathbb{S}^2$, then  $\bar u\vert_{\partial\Omega}=0$
 and  $\bar u$ is recovered from $u$ by solving the Stokes problem
   $$
      \left\{\aligned(1-\alpha\Delta)\bar u+\nabla q=u,\\
      \operatorname{div}\bar u=0,\quad \bar u\vert_{\partial\Omega}=0.
      \endaligned\right.
      $$
In the case when there is no boundary
$$
\bar u=(1-\alpha\Delta)^{-1}u.
$$
Here $\alpha=\alpha'L^2$ and $\alpha'>0$ is a small dimensionless parameter, so that
$\bar u$ is a smoothed  vector field with higher spatial modes filtered out.

 The phase space with respect to  $\bar u$ is the Sobolev space $\mathbf{H}^{1}$ with divergence free condition
\begin{equation}\label{H1defin}
\bar u\in\mathbf{H}^1:=\left\{
                       \begin{array}{ll}
                         \dot{\mathbf{H}}^1(\mathbb{T}^d), & x\in\mathbb{T}^d,
\ \int_{\mathbb{T}^d}\bar u(x)dx=0, \\
                         \mathbf{H}^1(\mathbb{R}^d), & x\in\mathbb{R}^d, \\
                       \mathbf{H}^1_0(\Omega), & x\in\Omega\varsubsetneq\mathbb{R}^d,\mathbb{S}^2
                       \end{array}
                     \right.\qquad\operatorname{div}\bar u=0,
\end{equation}
with scalar product~\eqref{scalalpha}.

The results obtained in  \cite{arxiv, IZLap70,  MZ}
can be combined into   the following theorem.

\begin{theorem}
Let $d=2$. In each case of BC the system possesses a global attractor $\mathscr A\Subset\mathbf{H}^1$
with finite fractal dimension satisfying
$$\dim_F{\mathscr A}\le\frac1{8\pi}\cdot\left\{
\aligned
&\frac1{\alpha\gamma^4}
\min\left(\|\operatorname{curl}g\|^2_{L^2},\ \frac{\|g\|^2_{L^2}}{2\alpha}\right),
\quad x\in\mathbb{T}^2, \mathbb{R}^2, \mathbb{S}^2\\
&\frac{\|g\|^2_{L^2}}{2\alpha^2\gamma^4},\quad x\in\Omega\varsubsetneq\mathbb{R}^2,\mathbb{S}^2.
\endaligned
\right.
$$
In the 3D case the estimates in all tree cases look formally the same
$$
\dim_F\mathscr{A}\le
\frac{1}{12\pi}\frac{\| g\|_{L^2}^2}{\alpha^{5/2}\gamma^4},
\quad x\in\mathbb{T}^3,\  x\in\mathbb{R}^3,\  x\in\Omega\varsubsetneq\mathbb{R}^3\,.
$$
Furthermore, in the periodic case both on $\mathbb{T}^2$,
and $\mathbb{T}^3$ the upper bounds are optimal
in the limit as  $\alpha\to0^+$.
\end{theorem}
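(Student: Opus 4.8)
The plan is to follow the volume-contraction scheme of \S\,\ref{S:LTNS}, but carried out in the energy space $\mathbf H^1$ equipped with the $\alpha$-scalar product \eqref{scalalpha}, feeding into it the Lieb-type $L^2$-bound for $\rho$ for families with orthonormal derivatives; the details are in \cite{arxiv, IZLap70, MZ}, and I indicate the main steps. Throughout write $\bar u=(1-\alpha\Delta)^{-1}u$ (replaced near the boundary by the Stokes problem), $\|w\|_\alpha^2=\|w\|^2+\alpha\|\nabla w\|^2$, and use the identities $(u,\bar u)=\|\bar u\|_\alpha^2$ and $(\partial_t u,\bar u)=\tfrac12\partial_t\|\bar u\|_\alpha^2$.

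First I would establish dissipativity and the enstrophy bounds. Pairing \eqref{DEalpha} with $\bar u$ in $\mathbf L^2$ and using $((\bar u,\nabla)\bar u,\bar u)=0$ gives $\tfrac12\partial_t\|\bar u\|_\alpha^2+\gamma\|\bar u\|_\alpha^2=(g,\bar u)$; estimating $(g,\bar u)\le\|g\|\,\|\bar u\|\le\tfrac\gamma2\|\bar u\|^2+\tfrac1{2\gamma}\|g\|^2$ yields an absorbing ball in $\mathbf H^1$ and, since only $\gamma\alpha\|\nabla\bar u\|^2$ is retained from the dissipation,
$$\limsup_{t\to\infty}\frac1t\int_0^t\|\nabla\bar u\|^2\,d\tau\le\frac{\|g\|^2}{2\alpha\gamma^2}.$$
When $\Omega$ has no boundary I would also take the curl of \eqref{DEalpha}: in 2D $\operatorname{curl}((\bar u,\nabla)\bar u)=(\bar u,\nabla)\operatorname{curl}\bar u$, so the nonlinearity again drops, $\|\operatorname{curl}\bar u\|=\|\nabla\bar u\|$, and one gets the sharper $\limsup_t\tfrac1t\int_0^t\|\nabla\bar u\|^2\le\|\operatorname{curl}g\|^2/\gamma^2$ with no factor $\alpha^{-1}$; in 3D the vorticity equation does not close and only the first bound survives. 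Since $\bar u$ gains two derivatives over $u$, asymptotic compactness in $\mathbf H^1$ is available (as in the cited works), so the global attractor $\mathscr A\Subset\mathbf H^1$ exists, constructed as in \S\,\ref{S:LTNS}.

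Next I would estimate the numbers $q(n)$. Linearizing \eqref{DEalpha} along a trajectory on $\mathscr A$ and writing the perturbation in the phase variable $\bar v$ gives $\partial_t\bar v=-\gamma\bar v-(1-\alpha\Delta)^{-1}\mathrm{P}[(\bar v,\nabla)\bar u+(\bar u,\nabla)\bar v]=:\mathcal L(t)\bar v$. For $\{\bar v_j\}_{j=1}^n$ orthonormal in $\mathbf H^1$ with respect to \eqref{scalalpha} the computation parallels \eqref{Trace1}: the damping gives $-\gamma\sum_j\|\bar v_j\|_\alpha^2=-\gamma n$; the term $((\bar u,\nabla)\bar v_j,\bar v_j)=\tfrac12\int(\bar u,\nabla)|\bar v_j|^2$ vanishes by $\operatorname{div}\bar u=0$; and, $\nabla\bar u$ being trace-free, the surviving term obeys the pointwise bound $|\nabla\bar u\,\bar v_j\!\cdot\!\bar v_j|\le2^{-1/2}|\nabla\bar u||\bar v_j|^2$, so that
$$\sum_{j=1}^n(\mathcal L(t)\bar v_j,\bar v_j)_\alpha\le-\gamma n+\tfrac1{\sqrt2}\|\nabla\bar u\|\,\|\rho\|,\qquad\rho:=\sum_{j=1}^n|\bar v_j|^2.$$
Here is where, instead of the crude estimate \eqref{withoutLT}, I would invoke the Lieb inequality for families with orthonormal derivatives: the scaling $w_j(x)=\alpha^{d/4}\bar v_j(\alpha^{1/2}x)$ turns $\{\bar v_j\}$ into a standard $\mathbf H^1$-orthonormal family and gives $\|\rho\|^2\le c_d\alpha^{-d/2}n$ on $\mathbb T^d$ (with the analogous inequality on $\mathbb R^d$, on $\mathbb S^2$, and on $\Omega\varsubsetneq\mathbb R^d,\mathbb S^2$ in its divergence free refinement), where on $\mathbb T^2$ the constant is the sharp value $c_2=1/(4\pi)$, i.e. the $p=2$ case of Theorem~\ref{Th:main} (equivalently the bound of \cite{IZLap70}), whose proof uses $I_2(m)<1$ from \S\,\ref{S:App}. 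Completing the square, $q(n)\le-\tfrac\gamma2 n+\tfrac{c_d}{4\gamma\alpha^{d/2}}\limsup_t\tfrac1t\int_0^t\|\nabla\bar u\|^2$; inserting the two enstrophy bounds and solving $q(n^*)=0$ reproduces the stated right-hand sides (in 2D the power $\alpha^{-1}$ paired with $\|\operatorname{curl}g\|^2$, or $\alpha^{-1}\cdot\alpha^{-1}=\alpha^{-2}$ paired with $\|g\|^2$, hence the $\min$; in 3D the single power $\alpha^{-3/2}\cdot\alpha^{-1}=\alpha^{-5/2}$ in all three cases), and Theorem~\ref{T:frac} together with Remark~\ref{R:conc} gives $\dim_F\mathscr A\le n^*$. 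For optimality as $\alpha\to0^+$ on $\mathbb T^2$ and $\mathbb T^3$ I would take $g$ so that \eqref{DEalpha} has an explicit Kolmogorov-type steady state built from a single $-\Delta$-eigenmode, linearize, and count the unstable directions; that count is a lattice sum of the kind occurring in $I_p(m)$, and since $I_p(m)\nearrow I_p(\infty)=1$ it matches asymptotically the continuum prediction behind the upper bound, so no constant is lost.

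The hard part is not the trace computation—it parallels the Navier--Stokes one of \S\,\ref{S:LTNS} almost verbatim—but the \emph{sharp} value $c_2=1/(4\pi)$ of the constant in the $L^2$-bound for $\rho$ for $\mathbf H^1$-orthonormal families on the torus: obtaining this exact value, rather than merely some finite one, is what makes the periodic estimates optimal, and it rests on the subtle monotonicity $\tfrac{d}{dm}I_p(m)>0$, i.e. $I_p(m)<1$ for all finite $m$ with $I_p(\infty)=1$, proved in \S\,\ref{S:App} (known for $p=2$ from \cite{IZLap70}). The secondary difficulty is the matching lower bound for $\dim\mathscr A$ in the optimality statement, which requires exhibiting an (approximately) unstable subspace of the linearization whose dimension is asymptotic to $n^*$ as $\alpha\to0^+$.
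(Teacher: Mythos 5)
Your proposal follows essentially the same route as the paper, which itself only sketches the argument and defers the details to \cite{arxiv, IZLap70, MZ}: trace estimates for the linearization computed in the scalar product \eqref{scalalpha}, the $L^2$-bound for $\rho$ for families with orthonormal derivatives (Proposition~\ref{P:L2bounds}, i.e.\ the $p=2$ case of Theorem~\ref{Th:main} resting on $I_2(m)<1$), the two energy/enstrophy balances producing the $\min$ in 2D, and the instability analysis of generalized Kolmogorov flows for the matching lower bounds. Your 2D constants check out exactly against the stated bounds.

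One concrete slip: the pointwise inequality $|\nabla\bar u\,v\cdot v|\le 2^{-1/2}|\nabla\bar u||v|^2$ is a two-dimensional fact about trace-free matrices. For a trace-free $d\times d$ matrix the sharp constant is $\sqrt{(d-1)/d}$, so in 3D it is $\sqrt{2/3}$ (take $\nabla\bar u$ proportional to $\mathrm{diag}(2,-1,-1)/\sqrt6$ and $v=e_1$ to see that $2^{-1/2}$ fails). It is precisely this $\sqrt{2/3}$ that produces the $\frac{1}{12\pi}$ in the stated 3D bound; carrying $2^{-1/2}$ through your computation gives $\frac{1}{16\pi}$ instead, so the claim that your scheme ``reproduces the stated right-hand sides'' in 3D does not hold as written and rests on a false pointwise estimate. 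The power $\alpha^{-5/2}$ is unaffected; only the numerical constant needs the dimension-dependent correction. Your treatment of the lower bound is, like the paper's, only a pointer to the Kolmogorov-flow construction (and in 3D one also needs the Squire transformation of \cite{arxiv} to reduce to the 2D count), but that matches the level of detail the paper itself provides.
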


To the best of our knowledge this is the first example of a meaningful  3D
hydrodynamic model with sharp two-sided estimates for
the dimension of the global attractor.

Optimal lower bounds for the torus $\mathbb{T}^2$ are based on the instability
analysis of the specific stationary solutions --- generalized
Kolmogorov flows \cite{IZLap70} and are carried over to $\mathbb{T}^3$
by means of the Squire's transformation \cite{arxiv}.

Explicit upper bounds for all types of domains and boundary
conditions as  for the Navier--Stokes system are obtained by
finding good estimates for the global Lyapunov exponents.  However,
the phase space is now $\mathbf{H}^1$ and the $n$-traces of the
linearized operator are now calculated with respect to the scalar
product~\eqref{scalalpha}. The Lieb--Thirring inequalities for
$\mathbf{L}^2$-orthonormal families are replaced by inequalities
for the  $L^p$-forms of families of functions with orthonormal
derivatives also obtained  by E. Lieb in ~\cite{LiebJFA}.

\begin{proposition}\label{P:L2bounds} \cite{arxiv}.
Let $\{v_j\}_{j=1}^n\in\mathbf{H}^1$, see~\eqref{H1defin}. Suppose that
this family is orthonormal with respect to the scalar product~\eqref{scalalpha}:
\begin{equation}\label{alpha}
(v_i,v_j)+\alpha(\nabla u_i,\nabla v_j)=\delta_{ij}.
\end{equation}
Then the function
$$
\rho(x)=\sum_{j=1}^n|v_j(x)|^2
$$
satisfies
\begin{equation}\label{L2bounds}
\aligned
\aligned
&\|\rho\|_{L^2}\le\frac1{2\sqrt{\pi}}\frac{n^{1/2}}{\alpha^{1/2}},\qquad d=2,\\
&\|\rho\|_{L^2}\le\frac1{2\sqrt{\pi}}\frac{n^{1/2}}{\alpha^{3/4}}, \qquad d=3.
\endaligned
\endaligned
\end{equation}
\end{proposition}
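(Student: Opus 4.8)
The plan is to bound $\|\rho\|_{L^2}$ by duality together with a Birman--Schwinger reduction, reading off the constant from the $L^2$-norm of a Green function. Since $\rho\ge0$ one has $\|\rho\|_{L^2}=\sup\bigl\{\int\rho\psi\,dx:\ \psi\ge0,\ \|\psi\|\le1\bigr\}$, and for such a non-negative test function $\psi$ I would unfold
$$
\int\rho\,\psi\,dx=\sum_{j=1}^n(\psi v_j,v_j)=\sum_{j=1}^n(\mathrm A v_j,v_j)_\alpha ,
$$
where $\mathrm A$ is the bounded operator on $\mathbf H^1$ (with the scalar product~\eqref{scalalpha}) determined by $(\mathrm A v,w)_\alpha=(\psi v,w)$ for all $w\in\mathbf H^1$, i.e.\ $\mathrm A=(1-\alpha\Delta)^{-1}M_\psi$ with $M_\psi$ multiplication by $\psi$ (on $\mathbb T^d$, the inverse acting on mean-zero functions). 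Since $\psi$ is real and $\ge0$, $\mathrm A$ is self-adjoint and non-negative on $\mathbf H^1$; writing $\Pi_\alpha$ for the $\mathbf H^1$-orthogonal projection onto $\mathrm{span}\{v_j\}_{j=1}^n$, the display above equals $\Tr_\alpha(\Pi_\alpha\mathrm A\Pi_\alpha)$, which does not exceed the sum of the $n$ largest eigenvalues of $\mathrm A$.

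Next I would pass, by conjugation with $(1-\alpha\Delta)^{1/2}$, from the eigenvalues of $\mathrm A$ to those of the Birman--Schwinger operator $\mathrm B:=M_{\sqrt\psi}\,(1-\alpha\Delta)^{-1}M_{\sqrt\psi}$ on $L^2$ (same nonzero spectrum), and control the sum of its $n$ largest eigenvalues $\mu_1\ge\mu_2\ge\cdots$ by Cauchy--Schwarz:
$$
\int\rho\,\psi\,dx\le\sum_{k=1}^n\mu_k(\mathrm B)\le n^{1/2}\Bigl(\sum_{k\ge1}\mu_k(\mathrm B)^2\Bigr)^{1/2}=n^{1/2}\|\mathrm B\|_{\mathrm{HS}} .
$$
The Hilbert--Schmidt norm is explicit: $\mathrm B$ has kernel $\sqrt{\psi(x)}\,G_\alpha(x,y)\sqrt{\psi(y)}$, with $G_\alpha\ge0$ the Green function of $1-\alpha\Delta$ (with the relevant boundary condition), so that $\|\mathrm B\|_{\mathrm{HS}}^2=\iint\psi(x)G_\alpha(x,y)^2\psi(y)\,dx\,dy\le\|\psi\|^2\sup_x\|G_\alpha(x,\cdot)\|^2$ (Cauchy--Schwarz, or Young's convolution inequality in the translation-invariant cases). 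On $\mathbb R^d$ the supremum equals $\|G_\alpha\|_{L^2(\mathbb R^d)}^2$, which by Plancherel and the rescaling $\xi\mapsto\alpha^{-1/2}\eta$ is $\frac{\alpha^{-d/2}}{(2\pi)^d}\int_{\mathbb R^d}(1+|\eta|^2)^{-2}\,d\eta$, that is $\frac1{4\pi\alpha}$ for $d=2$ and $\frac1{8\pi\alpha^{3/2}}$ for $d=3$; for $\Omega\varsubsetneq\mathbb R^d$ one obtains the same bound by domain monotonicity of the Dirichlet Green function of $1-\alpha\Delta$. Collecting, $\|\rho\|_{L^2}^2\le n\,\|G_\alpha\|_{L^2}^2$, which gives~\eqref{L2bounds} (for $d=3$ even with the smaller constant $\frac1{2\sqrt{2\pi}}$).

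The step I expect to be the genuine obstacle is the torus case, where one must show $\|G_\alpha^{\mathbb T^d}\|_{L^2(\mathbb T^d)}^2\le\|G_\alpha^{\mathbb R^d}\|_{L^2(\mathbb R^d)}^2$. Taking $\mathbb T^2=[0,L]^2$ and setting $m^2=L^2/(4\pi^2\alpha)$, this is precisely the lattice-sum inequality $I_2(m)<1$ with $I_2(m)=\frac{m^2}{\pi}\sum_{n\in{\mathbb Z}_0^2}(m^2+|n|^2)^{-2}$, established in~\cite{IZLap70}, the three-dimensional analogue covering $d=3$. As stressed in the Introduction, the monotonicity of such sums in $m$ is subtle (similar sums on $\mathbb S^2$ oscillate for $m$ not too large); everything upstream of it here is soft functional analysis.
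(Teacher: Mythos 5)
Your argument is correct and follows essentially the same route as the paper: the paper does not reprove Proposition~\ref{P:L2bounds} (it cites \cite{arxiv}) but establishes the general $L^p$ version in Theorem~\ref{Th:main} by exactly this duality--Birman--Schwinger--trace scheme, which for $p=2$ reduces to your Hilbert--Schmidt computation (the Araki--Lieb--Thirring step there becomes an identity when $r=2$), and in both treatments the only nontrivial input is the lattice-sum inequality $I_2(m)<1$ of Propositions~\ref{main0} and~\ref{S:monotonicity-T3}. The only cosmetic differences are that you bound the kernel integral by the Schur test/Young inequality and handle $\Omega\varsubsetneq\mathbb{R}^d$ by domain monotonicity of the Green function, where the paper evaluates the trace in the exponential basis and extends by zero.
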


Inequalities of this type in $\mathbb{R}^d$ with $L^p$-norm on the left-hand side were proved  in~\cite{LiebJFA}
for   $p=\infty$ ($d=1$),  $1\le p<\infty$ ($d=2$), and for the critical
$p=d/(d-2)$ ($d\ge3$). No expressions for the constants were given. Our interest in what follows
is in the case of the 2D torus $\mathbb{T}^2$. Furthermore, since the scalar
product~\eqref{alpha} is defined by  system~\eqref{DEalpha}, and we are
now interested just in inequalities themselves, we turn to the more convenient
(and clearly equivalent) scalar product
\eqref{orth-m} which is also used in~\cite{LiebJFA}.
Finally, we consider the scalar case only, since in $x$-coordinates
the vector  case involves no problems at all,
while the case of the sphere $\mathbb{S}^2$ will  be
treated  in a forthcoming work.

\begin{theorem}\label{Th:main}
Let $\{\varphi_j\}_{j=1}^n$ be a family of zero mean functions on
the torus $\{\varphi_j\}_{j=1}^n\in\dot{H}^1(\mathbb{T}^2)$ or let
$\{\varphi_j\}_{j=1}^n\in {H}^1_0(\Omega)$, where
$\Omega\subseteq\mathbb{R}^2$ is an arbitrary domain. Suppose further that
in either case the family is
 orthonormal with respect to the scalar product:
\begin{equation}\label{orth-m}
m^2(\varphi_i,\varphi_j)+(\nabla\varphi_i,\nabla\varphi_j)=\delta_{ij}.
\end{equation}
Then for $1\le p<\infty$ the function
$$
\rho(x):=\sum_{j=1}^n|\varphi_j(x)|^2
$$
satisfies the inequality
\begin{equation}\label{Liebd2}
\|\rho\|_{L^p}\le\mathrm{B}_pm^{-2/p}n^{1/p},
\end{equation}
where
\begin{equation}\label{Liebbp}
\mathrm{B}_p\le\left(\frac{p-1}{4\pi}\right)^{(p-1)/p}.
\end{equation}
\end{theorem}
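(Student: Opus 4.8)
The plan is to reduce the inequality to a statement about the heat kernel (or rather the resolvent kernel) of the operator $m^2 - \Delta$ on $\mathbb{T}^2$, following the duality method of Lieb--Thirring/Rumin type. The key observation is that since $\{\varphi_j\}$ is orthonormal for the form $m^2(\cdot,\cdot) + (\nabla\cdot,\nabla\cdot)$, the operator $\gamma := \sum_j \varphi_j \otimes \varphi_j$ satisfies $0 \le (m^2 - \Delta)^{1/2}\gamma(m^2-\Delta)^{1/2} \le \mathrm{Id}$ as an operator on $L^2$, i.e. $\gamma$ is dominated by $(m^2-\Delta)^{-1}$ in the appropriate sense. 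One wants to bound $\|\rho\|_{L^p} = \||\gamma(x,x)|\|_{L^p}$, and by duality this equals $\sup\{\int \rho(x) W(x)\,dx : \|W\|_{L^{p'}} \le 1,\ W \ge 0\}$ where $p' = p/(p-1)$. Writing $\int \rho W = \Tr(\gamma W) = \Tr\big((m^2-\Delta)^{1/2}\gamma(m^2-\Delta)^{1/2}\cdot(m^2-\Delta)^{-1/2}W(m^2-\Delta)^{-1/2}\big)$ and using the operator bound on $\gamma$, this is at most $\Tr\big((m^2-\Delta)^{-1/2}W(m^2-\Delta)^{-1/2}\big)_+$, but more usefully at most the trace of the positive part, which for a single bound reduces to $\Tr\big(W^{1/2}(m^2-\Delta)^{-1}W^{1/2}\big)$ — a quantity with an explicit kernel.

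The second step is to estimate $\Tr\big(W^{1/2}(m^2-\Delta)^{-1}W^{1/2}\big) = \int_{\mathbb{T}^2} W(x)\,G_m(x,x)\,dx$, where $G_m$ is the Green's kernel of $m^2-\Delta$ on the torus. On the diagonal this is the \emph{constant} $G_m(x,x) = \frac1{|\mathbb{T}^2|}\sum_{k}\frac1{m^2+|k|^2}$ summed over the dual lattice; here the key inequality from \S\,\ref{S:App}, namely $I_p(m) < 1$, enters to control the relevant combination. Then $\int W\,G_m(x,x)\,dx = G_m(x,x)\int W \le G_m(x,x)\,|\mathbb{T}^2|^{1/p}\|W\|_{L^{p'}}$ by Hölder when $p=1$; for general $p$ one iterates the duality more carefully, using the Hölder-in-operators inequality $\Tr(W^{1/2}(m^2-\Delta)^{-1}W^{1/2})^p \le$ something involving $\|W\|_{L^{p'}}$ together with the fact that the kernel raised to a power is still controlled by a lattice sum of the same shape — this is exactly where the family $I_p(m)$ rather than just $I_2(m)$ is needed, and where the exponent structure $(p-1)/p$ in $\mathrm{B}_p$ and the power $n^{1/p}$, $m^{-2/p}$ come out. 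Concretely, one expects to arrive at
\begin{equation*}
\|\rho\|_{L^p}^p \le \frac{p-1}{4\pi}\,\big(I_p(m)\big)\cdot \text{(normalization)}\cdot n\cdot m^{-2},
\end{equation*}
and then invoking $I_p(m)<1$ yields $\mathrm{B}_p^p \le (p-1)/(4\pi)$, hence \eqref{Liebbp}.

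The main obstacle is getting the constant $\big(\frac{p-1}{4\pi}\big)^{(p-1)/p}$ \emph{sharp} rather than merely finite. The naive duality bound above gives a finite constant easily, but the precise factor requires handling the operator-Hölder step with care — one must pick the right split of the exponent $p$ (writing $W = W^{1-\theta}\cdot W^{\theta}$ and distributing between the two resolvent factors, optimizing over $\theta$), and then recognize that the resulting scalar optimization is precisely an optimization of $\int (m^2+|k|^2)^{-p}$-type sums. The cleanest route is probably to first establish the $\mathbb{R}^2$ analogue (where the lattice sum becomes an integral that evaluates in closed form, giving exactly $(p-1)/(4\pi)$ via $\int_{\mathbb{R}^2}(1+|\xi|^2)^{-p}\frac{d\xi}{(2\pi)^2} = \frac{1}{4\pi(p-1)}$), and then observe that passing to $\mathbb{T}^2$ replaces this integral by $\frac1{|\mathbb{T}^2|}\sum_k (m^2+|k|^2)^{-p}$, whose ratio to the continuum value is governed by $I_p(m)$; the inequality $I_p(m)<1$ then says the torus constant never exceeds the $\mathbb{R}^2$ constant, which is the content of \eqref{Liebbp}. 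The delicate point — that $I_p(m)$ is monotone in $m$, not merely bounded — is deferred to \S\,\ref{S:App} and may be invoked here as a black box.
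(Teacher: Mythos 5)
Your overall architecture --- test $\rho$ against a potential, pass to a trace of a Birman--Schwinger-type operator built from $(m^2-\Delta)^{-1/2}$, and reduce the constant to the lattice-sum inequality of \S\,\ref{S:App}, with the $\mathbb{R}^2$ integral $\frac{1}{4\pi(p-1)}$ as the benchmark --- is exactly the paper's (and Lieb's) strategy, and your final paragraph correctly identifies the role of Proposition~\ref{main0} as a black box. But the concrete chain you write down breaks at its central step. The bound $\Tr(\gamma W)\le \Tr\bigl(W^{1/2}(m^2-\Delta)^{-1}W^{1/2}\bigr)=\int W(x)G_m(x,x)\,dx$ is vacuous in two dimensions: the diagonal value $G_m(x,x)=\frac1{4\pi^2}\sum_{n}(m^2+|n|^2)^{-1}$ is a logarithmically divergent sum (likewise $K_0(m|x-y|)$ blows up on the diagonal in $\mathbb{R}^2$), so your right-hand side is $+\infty$ for every nontrivial $W\ge0$. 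Moreover, replacing $\gamma$ wholesale by $(m^2-\Delta)^{-1}$ discards the rank-$n$ information, so no argument along that line can produce the factor $n^{1/p}$; the displayed ``expected'' inequality contains an $n$ that your steps cannot generate. The phrases ``one iterates the duality more carefully'' and ``one expects to arrive at'' are standing in for the actual proof.

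The two missing ingredients, both supplied by the paper, are the following. First, the rank-$n$ H\"older step: setting $\psi_j=(m^2-\Delta)^{1/2}\varphi_j$ (orthonormal in $L^2$ by \eqref{orth-m}) and $\mathbf{K}=\Pi(m^2-\Delta)^{-1/2}V(m^2-\Delta)^{-1/2}\Pi$, one has $\int\rho V=\sum_{j=1}^n(\mathbf{K}\psi_j,\psi_j)\le\sum_{j=1}^n\lambda_j(\mathbf{K})\le n^{1/p}\bigl(\Tr\mathbf{K}^{r}\bigr)^{1/r}$ with $r=p/(p-1)$; this is simultaneously where $n^{1/p}$ appears and where the resolvent gets raised to a power $r>1$ that makes the lattice sum converge. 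Second, the Araki--Lieb--Thirring inequality $\Tr(BA^2B)^r\le\Tr(B^rA^{2r}B^r)$, which converts $\Tr\mathbf{K}^r$ into $\Tr\bigl(V^r(m^2-\Delta)^{-r}\Pi\bigr)=\frac1{4\pi^2}\sum_{n\in\mathbb{Z}^2_0}(m^2+|n|^2)^{-r}\int V^r$; this is the precise form of your ``distributing between the two resolvent factors,'' and no optimization over a split $\theta$ is needed. Note also that the lattice-sum exponent that enters is the dual one, $r=p/(p-1)$, not $p$ itself (harmless, since Proposition~\ref{main0} covers all exponents greater than $1$). Taking $V=\rho^{p-1}$ then closes the argument with the stated constant.
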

\begin{proof}
Since inequality \eqref{Liebd2} with constant \eqref{Liebbp} clearly holds for $p=1$,
we assume below that $1<p<\infty$. We also first consider the periodic case.

 Let us define two operators
$$
 \mathbb{H}= V^{1/2}(m^2-{\Delta})^{-1/2}\Pi,\quad
  \mathbb{H}^*=\Pi(m^2-{\Delta})^{-1/2}V^{1/2},
$$
 where $V\in L^p$ is a
non-negative scalar function  and $\Pi$ is the
projection onto the space of functions with mean value zero:
$$
\Pi\varphi=\varphi-\frac1{4\pi^2}\int_{\mathbf{T}^2}\varphi(x)\,dx.
$$
 Then ${\bf K}=
\mathbb{H}^*\mathbb{H}$ is a  compact self-adjoint operator in
 ${L}^2({\mathbb{T}}^2)$  and for $r=p'=p/(p-1)\in(1,\infty)$
$$
\aligned
\Tr \mathbf{K}^r=\Tr\left(\Pi(m^2-{\Delta})^{-1/2}V(m^2-{\Delta})^{-1/2}\Pi\right)^r\le\\\le
\Tr\left(\Pi(m^2-{\Delta})^{-r/2}V^r(m^2-{\Delta})^{-r/2}\Pi\right)=\\=
\Tr\left(V^r(m^2-{\Delta})^{-r}\Pi\right),
\endaligned
$$
where we used
 the Araki--Lieb--Thirring inequality for traces \cite{Araki, LT, traceSimon}:
$$
\Tr(BA^2B)^p\le\Tr(B^pA^{2p}B^p),\quad p\ge1,
$$
and the cyclicity property of the trace together with the facts
that $\Pi$ commutes with the Laplacian and that $\Pi$ is a
projection: $\Pi^2=\Pi$. Using the basis of orthonormal
eigenfunctions of the Laplacian
$$\frac1{2\pi}e^{in\cdot x},\quad
n\in\mathbb{Z}^2_0=\mathbb{Z}^2\setminus\{0,0\}$$
in view of the key estimate~\eqref{0.pm} proved  below we
find that
$$
\aligned
\operatorname{Tr} \mathbf{K}^r\le&
\operatorname{Tr}\left(V^r(m^2-{\Delta})^{-r}\Pi\right)=\\=&
\frac1{4\pi^2}\sum_{n\in\mathbb{Z}^2_0}\frac{1}{\bigl(m^2+|n|^2\bigr)^r}
\int_{\mathbb{T}^2}V^r(x)\,dx\le
\frac1{4\pi}\frac{m^{-2(r-1)}}{{r-1}}\|V\|^r_{L^r},.
\endaligned
$$
We can now argue as in~\cite{LiebJFA}. We observe that
$$
\int_{\mathbb{T}^2}\rho(x)V(x)\,dx=\sum_{i=1}^n\|\mathbb{H}\psi_i\|^2_{L^2},
$$
where
$$
\psi_j=(m^2-{\Delta})^{1/2}\varphi_j,\quad j=1,\dots,n.
$$
Next, in view of \eqref{orth-m} the $\psi_j$'s are
orthonormal in $L^2$
and in view of  the variational
principle
$$
\sum_{i=1}^n\|\mathbb{H}\psi_i\|^2_{L^2}=\sum_{i=1}^n(\mathbf{K}\psi_i,\psi_i)
\le\sum_{i=1}^n\lambda_i,
$$
where $\lambda_i$ are the eigenvalues of the
operator $\mathbf{K}$. Therefore
$$
\aligned
\int_{\mathbb{T}^2}\rho(x)V(x)\,dx\le\sum_{j=1}^n\lambda_j\le
n^{1/p}\left(\Tr K^r\right)^{1/r}\le\\\le  n^{1/p}
\left(\frac{p-1}{4\pi m^{2/(p-1)}}\right)^{(p-1)/p}\|V\|_{L^{p/(p-1)}}=\\=
n^{1/p}m^{-2/p}\left(\frac{p-1}{4\pi}\right)^{(p-1)/p}\|V\|_{L^{p/(p-1)}}.
\endaligned
$$
Finally, setting $V(x)=\rho(x)^{p-1}$ we obtain~\eqref{Liebd2},
\eqref{Liebbp}. This completes the proof for the torus.

The proof of the theorem for $\Omega=\mathbb{R}^2$ is a word for word
repetition of the above proof(without the projection $\Pi$, of course) and
with use of the Fourier transform instead of the Fourier series.
Furthermore, instead of a non-trivial inequality \eqref{0.pm}
for the Green's function on the diagonal we have the
equality
$$
\frac{(p-1)m^{2(p-1)}}\pi\int_{\mathbb{R}^2}\frac{dx}{(m^2+|x|^2)^p}=1.
$$

Finally, if $\Omega\varsubsetneq \mathbb{R}^2$ is a proper domain,
we extend by zero the vector functions $\varphi_j$ outside
$\Omega$ and denote the results by $\widetilde{\varphi}_j$, so
that $\widetilde{\varphi}_j\in {H}^1(\mathbb{R}^2)$. We further set
$\widetilde\rho(x):=\sum_{j=1}^n|\widetilde{\varphi}_j(x)|^2$.
Then setting in $\mathbb{R}^2$
$\widetilde\psi_i:=(m^2-{\Delta})^{1/2}\widetilde{\varphi}_i$,
we see that the system $\{\widetilde\psi_j\}_{j=1}^n$ is
orthonormal in ${L}^2(\R^2)$. Since clearly
$\|\widetilde\rho\|_{L^p(\R^d)}=\|\rho\|_{L^p(\Omega)}$, the proof
now reduces to the case  $\Omega=\mathbb{R}^2$ and therefore
is complete.
\end{proof}

\begin{corollary}\label{Cor:1_func}
The following interpolation inequality holds:
\begin{equation}\label{Gag-Nir}
\|\varphi\|_{L^q}\le\left(\frac1{4\pi}\right)^\frac{q-2}{2q}\left(\frac q2\right)^{1/2}
\|\varphi\|^{2/q}\|\nabla\varphi\|^{1-2/q},\qquad q\ge2.
\end{equation}
\end{corollary}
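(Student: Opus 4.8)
\section*{Proof proposal for Corollary~\ref{Cor:1_func}}

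The plan is to obtain \eqref{Gag-Nir} as the one-function ($n=1$) case of Theorem~\ref{Th:main} followed by an elementary minimization in the free parameter $m$. For a single function $\rho=|\varphi|^2$, so that $\|\rho\|_{L^p}=\|\varphi\|_{L^{2p}}^2$, and for $n=1$ the bound \eqref{Liebd2}--\eqref{Liebbp} reads
$$
\|\varphi\|_{L^{2p}}^2\le\left(\frac{p-1}{4\pi}\right)^{(p-1)/p}m^{-2/p}
$$
for every $\varphi\in\dot{H}^1(\mathbb{T}^2)$ (or $H^1_0(\Omega)$) normalized by $m^2\|\varphi\|^2+\|\nabla\varphi\|^2=1$. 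Applying this to $\varphi\big/\bigl(m^2\|\varphi\|^2+\|\nabla\varphi\|^2\bigr)^{1/2}$ removes the constraint and gives, for an arbitrary nonzero $\varphi$ and every $m>0$,
$$
\|\varphi\|_{L^{2p}}^2\le\left(\frac{p-1}{4\pi}\right)^{(p-1)/p}m^{-2/p}\bigl(m^2\|\varphi\|^2+\|\nabla\varphi\|^2\bigr).
$$

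Next I would minimize the right-hand side over $m>0$. Writing $g(m)=m^{2-2/p}\|\varphi\|^2+m^{-2/p}\|\nabla\varphi\|^2$, the stationarity condition $g'(m)=0$ yields the unique critical point $m^2=\|\nabla\varphi\|^2/\bigl((p-1)\|\varphi\|^2\bigr)$, which is a global minimum since $g(m)\to\infty$ both as $m\to0^+$ and as $m\to\infty$ (recall $\|\nabla\varphi\|>0$ for a nonzero zero-mean or $H^1_0$ function). At this value $m^2\|\varphi\|^2+\|\nabla\varphi\|^2=\frac{p}{p-1}\|\nabla\varphi\|^2$ and $m^{-2/p}=\bigl((p-1)\|\varphi\|^2/\|\nabla\varphi\|^2\bigr)^{1/p}$, so that $g(m)=\frac{p}{(p-1)^{(p-1)/p}}\|\varphi\|^{2/p}\|\nabla\varphi\|^{2(p-1)/p}$. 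Substituting back, the factor $(p-1)^{(p-1)/p}$ cancels the like factor in the Lieb constant, leaving
$$
\|\varphi\|_{L^{2p}}^2\le p\left(\frac1{4\pi}\right)^{(p-1)/p}\|\varphi\|^{2/p}\|\nabla\varphi\|^{2(p-1)/p}.
$$

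Finally I would set $q=2p\in(2,\infty)$, use $p=q/2$ and $(p-1)/p=(q-2)/q=1-2/q$, and take square roots to arrive at exactly \eqref{Gag-Nir}; the endpoint $q=2$ is the trivial equality $\|\varphi\|_{L^2}=\|\varphi\|$ and needs no argument. I do not expect any genuine obstacle here: the corollary is a one-step reduction plus a single-variable optimization, and the only point requiring care is the bookkeeping of exponents, which is arranged precisely so that the $(p-1)$-powers coming from $\mathrm{B}_p$ and from the optimal $m$ cancel and the clean constant in \eqref{Gag-Nir} survives.
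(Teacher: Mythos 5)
Your proposal is correct and follows essentially the same route as the paper: take $n=1$ in Theorem~\ref{Th:main} to get the additive inequality $\|\varphi\|_{L^{2p}}^2\le\mathrm{B}_p\bigl(m^{2-2/p}\|\varphi\|^2+m^{-2/p}\|\nabla\varphi\|^2\bigr)$ and minimize over $m>0$, with the $(p-1)^{(p-1)/p}$ factors cancelling exactly as you describe. Your explicit handling of the normalization (rescaling $\varphi$ so that $m^2\|\varphi\|^2+\|\nabla\varphi\|^2=1$) is a detail the paper leaves implicit, but the argument and the resulting constant are identical.
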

\begin{proof}
 For $n=1$ inequality \eqref{Liebd2} goes over to
$$
\|\varphi\|_{L^{2p}}\le\mathrm{B}_p^{1/2}m^{-1/p}
\left(m^2\|\varphi\|^2+\|\nabla\varphi\|^2\right)^{1/2},\quad
p\ge 1.
$$
Furthermore, writing this in the form
\begin{equation}
\label{additive}
\|\varphi\|_{L^{2p}}^2\le\mathrm{B}_p
\left(m^{2-2/p}\|\varphi\|^2+m^{-2/p}\|\nabla\varphi\|^2\right)
\end{equation}
and minimizing with respect  $m$ we obtain
\begin{equation}\label{multip}
\aligned
\|\varphi\|_{L^{2p}}^2\le\mathrm{B}_p\frac p{(p-1)^{(p-1)/p}}
\|\varphi\|^{2/p}\|\nabla\varphi\|^{2-2/p}=\\=
\left(\frac1{4\pi}\right)^{(p-1)/p}\,p\,
\|\varphi\|^{2/p}\|\nabla\varphi\|^{2-2/p},
\endaligned
\end{equation}
which is~\eqref{Gag-Nir}.
\end{proof}

The one function inequality \eqref{additive} for the torus $\mathbb{T}^2$
 and the equivalent multiplicative
inequality \eqref{multip}
can be proved in a more direct way in  which, however,  estimate \eqref{0.pm} as before
 plays the essential role. For the case of $\mathbb{R}^2$, see Remark~\ref{R:Bab-Beck}.
\begin{proof}[Direct proof of {Corollary}~\ref{Cor:1_func} for the torus]
 In fact, writing a zero mean
function $\varphi$ on the torus $\mathbb{T}^2$ in terms of the Fourier series
$$
\varphi(x)=\sum_{n\in\mathbb{Z}^2_0}a_ne^{ix\cdot n}
$$
we have by the Parseval identity
$$
\|\varphi\|_{L^2}=2\pi\|a\|_{l^2},
$$
and, furthermore, since the exponentials $e^{ix\cdot n}$ have norm
$1$ in $L^\infty$, we have
$$
\|\varphi\|_{L^\infty}\le\|a\|_{l^1}.
$$
This gives by the Riesz--Thorin interpolation theorem the well-known
Hausdorff--Young inequality
$$
\|\varphi\|_{L^p}\le(2\pi)^{2/p}\|a\|_{l^q},\quad
\frac1p+\frac1q=1,\quad p\ge2.
$$
Thus, by H\"older's inequality for an arbitrary $m>0$
\begin{equation}\label{mult-div}
\aligned
\|\varphi\|_{L^p}\le(2\pi)^{2/p}\|a\|_{l^q}=
(2\pi)^{2/p}\bigl\|(m^2+|n|^2)^{-1/2}\cdot(m^2+|n|^2)^{1/2}a_n\bigr\|_{l^q}\le\\
\le(2\pi)^{2/p}
\biggl(\sum_{n\in\mathbb{Z}^2_0}\frac1{(m^2+|n|^2)^{r/2}}\biggr)^{1/r}
\|(m^2+|n|^2)^{1/2}|a_n|\|_{l^2},
\endaligned
\end{equation}
where $\frac1r+\frac12=\frac1q$, so that
$$
\frac1r=\frac12-\frac1p=\frac{p-2}{2p},\qquad\frac r2-1=\frac2{p-2}.
$$
We now use the key inequality~\eqref{0.pm} in \eqref{mult-div}
$$
\biggl(\sum_{n\in\mathbb{Z}^2_0}\frac1{(m^2+|n|^2)^{r/2}}\biggr)^{1/r}<
\biggl(\frac\pi{(r/2-1)m^{2(r/2-1)}}\biggr)^{1/r}=
\left(\frac{\pi(p-2)}2\right)^{\frac{p-2}{2p}}m^{-\frac2p}
$$
and take into account  that
$$
\|(m^2+|n|^2)^{1/2}|a_n|\|^2_{l^2}=\frac1{4\pi^2}\|(m^2-\Delta)^{1/2}\varphi\|^2=
\frac1{4\pi^2}(m^2\|\varphi\|_{L^2}^2+\|\nabla \varphi\|^2_{L^2}).
$$
We obtain
$$
\|\varphi\|_{L^p}\le\left(\frac{p-2}{8\pi}\right)^{\frac{p-2}{2p}}
m^{-2/p}\left(m^2\|\varphi\|_{L^2}^2+\|\nabla \varphi\|^2_{L^2}\right)^{1/2},
\quad p\ge 2.
$$
Taking the square and changing $p$ to $2p$ gives the inequality
$$
\|\varphi\|^2_{L^{2p}}\le\left(\frac{p-1}{4\pi}\right)^{\frac{p-1}p}
\left(m^{2-2/p}\|\varphi\|^2+m^{-2/p}\|\nabla\varphi\|^2\right),\quad p\ge1,
$$
which \emph{coincides} with \eqref{additive} and is equivalent to
\eqref{multip}.
\end{proof}

\begin{remark}
{\rm
 For $q=4$ in \eqref{Gag-Nir}, that is, in the Ladyzhenskaya
inequality on the 2D torus $\mathbb{T}^2$ the constant is $1/\sqrt{\pi}$ and should be compared
with (and is greater than) the recent estimate of it
 as a one particle Lieb--Thirring
inequality~\cite{ILZ-JFA}
$$
\frac1\pi>\frac{3\pi}{32}\,.
$$
On the other hand, \eqref{Gag-Nir} works for all $q\ge2$ and
provides a simple expression for the constant.
}
\end{remark}

\begin{remark}\label{R:Bab-Beck}
{\rm It is worth mentioning that
the similar approach plus the knowledge
of the sharp Babenko--Beckner inequality  for the Fourier transform
$$
\| f\|_{L^p(\mathbb{R}^d)}\le
\left((2\pi)^{\frac1p-\frac1q}\ \frac{q^\frac1q}{p^\frac1p}\right)^{d/2}
\|\widehat f\|_{L^q(\mathbb{R}^d)}, \quad p\ge2,\quad \frac1p+\frac1q=1
$$
in the analog of \eqref{mult-div} for $\mathbb{R}^2$
gives the following improvement of inequality \eqref{Gag-Nir}
for $\mathbb{R}^2$ with the best to date closed form  estimate for the constant \cite{Nasibov}:
\begin{equation}\label{Gag-Nir-R2}
\|\varphi\|_{L^q(\mathbb{R}^2)}\le\left(\frac1{4\pi}\right)^\frac{q-2}{2q}
\frac{q^{(q-2)/q}}{(q-1)^{(q-1)/q}}
\left(\frac q2\right)^{1/2}
\|\varphi\|^{2/q}\|\nabla\varphi\|^{1-2/q},\ q\ge2,
\end{equation}
where  in comparison with \eqref{Gag-Nir} the middle factor in the constant here is
due to the Babenko--Beckner inequality and is
less than~$1$ for $q\in (2,\infty)$;
see also \cite[Theorem 8.5]{Lieb--Loss} where the  equivalent result is obtained
for the inequality in the additive form.
}
\end{remark}

\begin{remark}\label{R:Orlitz}
{\rm
The rate of growth as $q\to\infty$ of the constant both in \eqref{Gag-Nir} and \eqref{Gag-Nir-R2},
namely $q^{1/2}$, is optimal in the power scale, since otherwise the Sobolev space
$H^1$ in two dimensions would have been embedded in the Orlicz space with Orlicz function
$e^{t^{2+\varepsilon}}-1$, $\varepsilon>0$, which is impossible \cite{Tr}.
}
\end{remark}

\setcounter{equation}{0}
\section{Appendix. Monotonicity of lattice sums}\label{S:App}

In this section we prove two key estimates for the lattice sums in
dimension $2$ and $3$.

\begin{proposition}\label{main0}
The following inequality holds for  $p>1$ and all $m\ge0$
\begin{equation}\label{0.pm}
I_p(m):=\frac{(p-1)m^{2(p-1)}}\pi\sum_{n\in{\mathbb Z}_0^2}\frac1{(m^2+|n|^2)^p}<1.
\end{equation}
\end{proposition}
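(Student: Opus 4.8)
The plan is to integrate out the exponent $p$ by a Gamma (Mellin) representation, which turns \eqref{0.pm} into a single $p$-independent inequality for the Jacobi theta function $\theta_3(q)=\sum_{k\in\mathbb Z}q^{k^2}$, and then to prove that inequality by the theta transformation formula. Starting from $(m^2+|n|^2)^{-p}=\Gamma(p)^{-1}\int_0^\infty t^{p-1}e^{-t(m^2+|n|^2)}\,dt$, summing over $n\in\mathbb Z^2_0$ (legitimate by positivity), and using $\sum_{n\in\mathbb Z^2}e^{-t|n|^2}=\theta_3(e^{-t})^2$, I would set
\[
g(t):=t\bigl(\theta_3(e^{-t})^2-1\bigr)=t\sum_{n\in\mathbb Z^2_0}e^{-t|n|^2},\qquad
I_p(m)=\frac{(p-1)\,m^{2(p-1)}}{\pi\,\Gamma(p)}\int_0^\infty t^{p-2}e^{-tm^2}g(t)\,dt,
\]
the last integral converging at $0$ precisely because $g$ is bounded and $p>1$; for $p=1$, and for $m=0$ (where $I_p(0)=0$), the statement is trivial.

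The second step is a reduction: \eqref{0.pm} follows from the single estimate $g(t)<\pi$ for all $t>0$. Indeed $\int_0^\infty t^{p-2}e^{-tm^2}\,dt=\Gamma(p-1)\,m^{-2(p-1)}$ and $(p-1)\Gamma(p-1)/\Gamma(p)=1$, so $g<\pi$ gives $I_p(m)<1$ at once; moreover the same bound furnishes $\lim_{m\to\infty}I_p(m)=1$ (dominated convergence, with $g$ bounded near $0$) and, after one integration by parts, $\frac{d}{dm}I_p(m)>0$, which is the monotonicity pointed to in the Introduction. It is worth noting that $g(t)<\pi$ asserts exactly that the lattice sum $\sum_{n\in\mathbb Z^2_0}e^{-t|n|^2}$ stays below the Gaussian integral $\int_{\mathbb R^2}e^{-t|x|^2}\,dx=\pi/t$ — which is why the $\mathbb R^2$ analogue of the proposition is an equality.

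To prove $g(t)<\pi$ I would combine $g(0^+)=\pi$ with $g'(t)<0$ on $(0,\infty)$. The transformation formula $\sum_{n\in\mathbb Z^2}e^{-t|n|^2}=\frac\pi t\sum_{m\in\mathbb Z^2}e^{-\pi^2|m|^2/t}$ gives $g(t)=\pi+\pi\sum_{m\in\mathbb Z^2_0}e^{-\pi^2|m|^2/t}-t$, whence $g(0^+)=\pi$ and, differentiating term by term,
\[
g'(t)=\frac{\pi^3}{t^2}\sum_{m\in\mathbb Z^2_0}|m|^2e^{-\pi^2|m|^2/t}-1.
\]
For the sign of $g'$ I would split at $t=1$. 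If $t\ge1$, I instead differentiate the original form $g(t)=t\sum_{n\in\mathbb Z^2_0}e^{-t|n|^2}$, getting $g'(t)=\sum_{n\in\mathbb Z^2_0}(1-t|n|^2)e^{-t|n|^2}$; since $t|n|^2\ge1$, every summand is $\le0$ and those with $|n|^2\ge2$ are $<0$, so $g'(t)<0$. If $0<t\le1$, I would factor $e^{-\pi^2/t}$ out of the dual series; using $1/t\ge1$ to bound $e^{-\pi^2(|m|^2-1)/t}\le e^{-\pi^2(|m|^2-1)}$ for $|m|^2\ge2$, and that $t\mapsto t^{-2}e^{-\pi^2/t}$ is increasing on $(0,\pi^2/2)\supset(0,1]$, one obtains
\[
\frac{\pi^3}{t^2}\sum_{m\in\mathbb Z^2_0}|m|^2e^{-\pi^2|m|^2/t}\le C\pi^3e^{-\pi^2},\qquad
C:=4+\sum_{|m|^2\ge2}|m|^2e^{-\pi^2(|m|^2-1)}<5,
\]
and since $5\pi^3e^{-\pi^2}<1$ this yields $g'(t)<0$ on $(0,1]$. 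Hence $g'<0$ throughout, $g$ is strictly decreasing, and $g(t)<g(0^+)=\pi$.

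The one genuine difficulty is the global negativity of $g'$. The original lattice series for $g'$ and its Poisson dual are interchanged by $t\mapsto\pi^2/t$, so any attempt to prove $g'<0$ from the transformation formula alone simply runs in a circle, and an honest estimate cannot be avoided; the point of the split at $t=1$ is that it trivialises both regimes — the original series when $1-t|n|^2\le0$, and the dual series because it is overwhelmed by its four nearest-neighbour terms $|m|^2=1$ — with the two ranges overlapping so comfortably that no numerics are needed. The same scheme (Gamma representation, reduction to $g<\mathrm{const}$, transformation formula, split at $t=1$) carries over to the $\mathbb T^3$ lattice sum of \S\ref{S:App}, with $\theta_3(e^{-t})^2$ replaced by $\theta_3(e^{-t})^3$ and the factor $\pi/t$ by $(\pi/t)^{3/2}$.
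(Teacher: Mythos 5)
Your argument is correct, and it is worth recording precisely how it relates to the proof in \S\ref{S:App}. The paper deduces \eqref{0.pm} from two properties of $I_p(m)$ itself --- the limit $\lim_{m\to\infty}I_p(m)=1$ (obtained via Poisson summation and Bessel-function asymptotics) and the monotonicity $\frac{d}{dm}I_p(m)>0$, the latter being reduced, after the same Gamma representation and the modular relation \eqref{funcrel}, to the differential inequality \eqref{condmon}. Your reduction is different and shorter: you bound the integrand pointwise, $g(t):=t\bigl(\theta_3(e^{-t})^2-1\bigr)<\pi$, after which the identity $(p-1)\Gamma(p-1)=\Gamma(p)$ gives $I_p(m)<1$ for all $p>1$ and $m>0$ in one line, with no need for the asymptotics as $m\to\infty$ (and, as you observe, monotonicity in $m$ also drops out of $g'<0$ after the substitution $u=tm^2$). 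On the other hand, your key lemma is the paper's in disguise: by \eqref{funcrel} one has $g(t)=\pi\varphi(\pi/t)^2-t$, hence $g'(t)=-2y^2\varphi(y)\varphi'(y)-1$ with $y=\pi/t$, so $g'<0$ is exactly \eqref{condmon}, your split at $t=1$ is the paper's split at $y=\pi$, and your computation for $t\ge1$ (every term $(1-t|n|^2)e^{-t|n|^2}\le 0$) is literally the paper's small-$y$ case. What is genuinely new is the regime $0<t\le1$ (i.e. $y\ge\pi$), where you control the dual lattice sum by its four nearest-neighbour terms together with the monotonicity of $t^{-2}e^{-\pi^2/t}$, ending at $5\pi^3e^{-\pi^2}<1$; this is more elementary than the paper's $\coth(\pi y/2)$ majorant and the ensuing hyperbolic-function calculus, at the price of being very far from sharp, which is harmless here. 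Two small points to tidy up: drop the aside about $p=1$, since the lattice sum diverges there and the Proposition excludes that case; and the bound $C<5$ should get its one-line justification, e.g. $\sum_{|m|^2\ge2}|m|^2e^{-\pi^2(|m|^2-1)}\le e^{2-\pi^2}\sum_{m\in\mathbb Z^2_0}|m|^2e^{-|m|^2}<24\,e^{-\pi^2}<1$.
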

\begin{proof}
Inequality~\eqref{0.pm} will obviously follow if we show that
\begin{equation}\label{liminfty}
\lim_{m\to\infty}I_p(m)=1
\end{equation}
and $I_p(m)$ is monotone increasing:
\begin{equation}\label{monotone}
\frac d{dm}I_p(m)>0\quad\text{for}\ m>0.
\end{equation}

The proof of \eqref{liminfty} is easy.
Setting
$$
f(x):=\frac1{(1+|x|^2)^p},\quad x\in\mathbb R^2
$$
we use the Poisson summation formula and write:
$$
\aligned
\sum_{n\in{\mathbb Z}_0^2}\frac1{(m^2+|n|^2)^p}=m^{-2p}
\left(\sum_{n\in{\mathbb Z}^2}f(|n|/m)-1\right)=\\
m^{-2p}\left(m^2\int_{{\mathbb R}^2}f(x)\,dx-1
+2\pi m^2\sum_{n\in{\mathbb Z}^2_0}\widehat f(2\pi|n|m)\right)=\\
\frac1{m^{2(p-1)}}\frac\pi{p-1}-\frac1{m^{2p}}+\frac{2\pi}{m^{2(p-1)}}
\sum_{n\in{\mathbb Z}_0^2}\widehat f(2\pi|n|m),
\endaligned
$$
where $\widehat f(\xi)$ is the Fourier transform of $f$. Since the function $f(z)$
is analytic in the strip $|\mathrm{Im}z|<1$, it follows that its Fourier transform
is exponentially decaying and therefore the sum of the last two terms
is negative for all sufficiently large $m$ and is of the order $O(1/m^{2p})$ as
$m\to\infty$. This proves~\eqref{liminfty}.

 More precisely, since $f$ is radial
$$
\widehat f(\xi)=\frac1{2\pi}\int_{\mathbb R^2}f(x)e^{i\xi\cdot x}\,dx=g(t), \quad t=|\xi|,
$$
and where
$$
g(t)=\int_0^\infty\frac{J_0(tr)r\,dr}{(1+r^2)^p}=\frac1{2^{p-1}\Gamma(p)}\,t^{p-1}K_{p-1}(t).
$$
Here  $K_\nu$ is the modified Bessel function of the second kind,
and  the second equality is formula $13.51\,(4)$ in \cite{Watson}.
This finally gives
$$
I_p(m)=1-\frac{p-1}\pi\frac1{m^2}+\frac{4(p-1)}{2^p\Gamma(p)}\sum_{n\in\mathbb Z^2_0}F_{p-1}(2\pi|n|m),
$$
where
$$
F_{p}(t):=t^{p}K_{p}(t).
$$
It remains to recall that
$$
K_p(t)=\sqrt{\frac2\pi}\frac{e^{-t}}{\sqrt{t}}\left(1+O\left(\frac1t\right)\right)\quad
\text{as}\ t\to \infty.
$$

We now turn to the proof of~\eqref{monotone}. Using the
formula
$$
M^{-p}=\frac1{\Gamma(p)}\int_0^\infty x^{p-1}e^{-Mx}\,dx
$$
with $M=m^2+n_1^2+n_2^2$ and summing over the lattice $n\in\mathbb{Z}^2_0$ we obtain
\begin{equation}\label{Itheta}
I_p(m)=\frac{(p-1)m^{2(p-1)}}{\pi\Gamma(p)}\int_0^\infty x^{p-1}e^{-m^2x}\bigl(\theta_3^2(e^{-x})-1\bigr)\,dx,
\end{equation}
where $\theta_3(q)$ is the Jacobi theta function
$$
\theta_3(q)=\sum_{n=-\infty}^\infty q^{n^2}.
$$
Crucial for us is the following functional relation that is a corollary
of the Poisson summation formula and the Fourier transform of the Gaussian
\begin{equation}\label{funcrel}
\varphi(x)=\frac{\varphi(x^{-1})}{\sqrt{x}},\qquad \varphi(x):=\theta_3(e^{-\pi x})=
\sum_{n=-\infty}^\infty  e^{-\pi x n^2}.
\end{equation}
Rewriting \eqref{Itheta} in terms of the function $\varphi$, changing the variable and
then using \eqref{funcrel} we arrive at
$$
\aligned
I_p(m)=\frac{(p-1)}{\pi\Gamma(p)}\int_0^\infty x^{p-1}e^{-x}
\left(m^{-2}\varphi^2\left(\frac{x}{\pi m^2}\right)-m^{-2}\right)\,dx=
\\=\frac{(p-1)}{\pi\Gamma(p)}\int_0^\infty x^{p-1}e^{-x}
\left(\frac\pi{ x}\varphi^2\left(\frac{\pi m^2}{x}\right)-m^{-2}\right)\,dx.
\endaligned
$$
Therefore
$$
\frac d{dm}I_p(m)=
\frac{2(p-1)}{\pi\Gamma(p)m^3}\int_0^\infty x^{p-1}e^{-x}
\left(2y^2\varphi\left(y\right)
\varphi'\left(y\right)+1\right)\,dx,
$$
where $y=y(x):={\pi m^2}/x$ ,
so that the monotonicity will be verified  if we prove the following inequality
\begin{equation}\label{condmon}
2y^2\varphi(y)\varphi'(y)+1\ge0, \quad y\in\mathbb{R}_+.
\end{equation}
It is worthwhile to say that this sufficient condition for monotonicity
is independent both of $m$ and $p$\,!

Since $\varphi(y)$ decays extremely fast as $y$ grows, it is most important to
verify \eqref{condmon} near $y=0$. For this purpose we again use
\eqref{funcrel} and taking into account that
$$
2\varphi(t)\varphi'(t)=(\varphi^2(t))'=-\sum_{n\in\mathbb{Z}^2}
\pi|n|^2e^{-\pi|n|^2t}=-\sum_{n\in\mathbb{Z}^2_0}\pi|n|^2e^{-\pi|n|^2t}
$$
we obtain
$$
\aligned
2y^2\varphi(y)\varphi'(y)=
y^2(\varphi^2(y))'=y^2\left(y^{-1}\varphi^2(y^{-1})\right)'=
-\varphi^2(y^{-1})-\\-2y^{-1}\varphi(y^{-1})\varphi'(y^{-1})=-1+
\sum_{n\in\mathbb Z^2_0}\left(y^{-1}\pi (n_1^2+n_2^2)-1\right)e^{-(n_1^2+n_2^2)\pi y^{-1}}>-1
\endaligned
$$
for $y\le\pi$, so that \eqref{condmon} holds in this case.

Thus, we only need to check \eqref{condmon} for $y\ge\pi$.
We have a lot of free space here and this can be done in many ways, for instance, we may replace
\begin{equation}\label{fipsi}
\varphi(y)=\sum_{n=-\infty}^\infty e^{-\pi n^2y}\le \sum_{n=-\infty}^\infty e^{-|n|\pi y}=
1+\frac{2e^{-\pi y}}{1-e^{-\pi y}}=\coth(\pi y/2)=:\psi(y)
\end{equation}
with the similar estimate for the derivative:
$$
0>\varphi'(y)\ge \psi'(y), \quad y\ge \pi,
$$
which holds in view of the elementary inequality
$$
ne^{-an^2}\le e^{-an}, \quad n\ge1, \quad a\ (=\pi y)\ge1.
$$
 Therefore, we may replace
 $\varphi(y)$ by $\psi(y)$ and verify instead that
 $$
 \aligned
 2y^2\varphi(y)\varphi'(y)+1\ge 2y^2\psi(y)\psi'(y)+1=\\-\pi y^2 \frac{\cosh(\pi y/2)}{\sinh^3(\pi y/2)}+1=:-g(y)+1>0.
 \endaligned
 $$
 The function $g(y)>0$ with derivative
 $$
 g'(y)=
 -\frac{t(4\cosh t(t\cosh t -\sinh t )+2t)}{(\cosh^2 t -1)^2}\,\bigg|_{t=\pi y/2}<0\quad
 \text{for}\quad t\ge1
 $$
 is monotone decreasing to $0$, which finally gives for $y\ge\pi$
 $$
  2y^2\varphi(y)\varphi'(y)+1>-g(\pi)+1=-0.0064\dots+1>0
  $$
and completes the proof.
\end{proof}

The idea of the proof for  the 3D lattice sum is similar
and reduces to the 2D case  even technically.

\begin{proposition}\label{S:monotonicity-T3}
The following inequality holds for  $p>3/2$ and all $m\ge0$
\begin{equation}\label{0.pm3}
I_p(m):=m^{2p-3}
\sum_{n\in{\mathbb Z}_0^3}\frac1{(m^2+|n|^2)^p}<
\frac{\Gamma(p-3/2)\pi^{3/2}}{\Gamma(p)}.
\end{equation}
\end{proposition}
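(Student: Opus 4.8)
The plan is to follow the proof of Proposition~\ref{main0} almost line by line, with the lattice $\mathbb{Z}^2$ replaced by $\mathbb{Z}^3$ throughout. As in that proof, \eqref{0.pm3} will follow once we show that $I_p(m)$ is continuous on $[0,\infty)$ with $I_p(0)=0$ (immediate, since $2p-3>0$), that
$$\lim_{m\to\infty}I_p(m)=\frac{\Gamma(p-3/2)\pi^{3/2}}{\Gamma(p)},$$
and that $\tfrac d{dm}I_p(m)>0$ for $m>0$. The limit is handled exactly as in the $2$D case: the Poisson summation formula applied to $f(x)=(1+|x|^2)^{-p}$ on $\mathbb{R}^3$ (whose Fourier transform, expressible through $K_{p-3/2}$, decays exponentially because $f$ is analytic in a complex strip) gives
$$I_p(m)=\int_{\mathbb{R}^3}\frac{dx}{(1+|x|^2)^p}-\frac1{m^3}+O(e^{-cm}),$$
and $\int_{\mathbb{R}^3}(1+|x|^2)^{-p}\,dx=\pi^{3/2}\Gamma(p-3/2)/\Gamma(p)$.

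For the monotonicity I would repeat the theta-function computation. Writing $M^{-p}=\Gamma(p)^{-1}\int_0^\infty x^{p-1}e^{-Mx}\,dx$ with $M=m^2+|n|^2$ and summing over $n\in\mathbb{Z}_0^3$, so that $\sum_{n\in\mathbb{Z}^3}e^{-|n|^2x}=\theta_3^3(e^{-x})$, gives
$$I_p(m)=\frac{m^{2p-3}}{\Gamma(p)}\int_0^\infty x^{p-1}e^{-m^2x}\bigl(\theta_3^3(e^{-x})-1\bigr)\,dx.$$
Rewriting this in terms of $\varphi(x)=\theta_3(e^{-\pi x})$, substituting $x\mapsto x/m^2$, and applying the functional equation \eqref{funcrel} in the form $\varphi^3(x)=x^{-3/2}\varphi^3(x^{-1})$ (which is just the cube of \eqref{funcrel}) turns this into
$$I_p(m)=\frac1{\Gamma(p)}\int_0^\infty x^{p-1}e^{-x}\Bigl(\frac{\pi^{3/2}}{x^{3/2}}\,\varphi^3\!\bigl(\tfrac{\pi m^2}{x}\bigr)-\frac1{m^3}\Bigr)\,dx,$$
and differentiating in $m$, with $y=y(x):=\pi m^2/x$, yields
$$\frac d{dm}I_p(m)=\frac3{m^4\,\Gamma(p)}\int_0^\infty x^{p-1}e^{-x}\bigl(2y^{5/2}\varphi(y)^2\varphi'(y)+1\bigr)\,dx.$$
Thus, exactly as in $2$D, monotonicity reduces to a single pointwise inequality, again independent of both $m$ and $p$,
$$2y^{5/2}\varphi(y)^2\varphi'(y)+1\ge0,\qquad y\in\mathbb{R}_+.$$

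To verify this I would split at $y=2\pi/3$. For $0<y\le 2\pi/3$, applying \eqref{funcrel} once more and expanding $\varphi^3(u)=\sum_{n\in\mathbb{Z}^3}e^{-\pi u|n|^2}$ produces the identity
$$2y^{5/2}\varphi(y)^2\varphi'(y)+1=\sum_{n\in\mathbb{Z}_0^3}\Bigl(\frac{2\pi|n|^2}{3y}-1\Bigr)e^{-\pi|n|^2/y},$$
every summand of which is nonnegative precisely when $y\le 2\pi/3$, the binding lattice point being $|n|^2=1$. For $y\ge 2\pi/3$ one argues as in the $2$D proof: since $\varphi(y)\le\coth(\pi y/2)=:\psi(y)$ and $0>\varphi'(y)\ge\psi'(y)$ (valid because $\pi y\ge1$), the claim reduces to $h(y):=\pi y^{5/2}\cosh^2(\pi y/2)/\sinh^4(\pi y/2)<1$, and since $\frac d{dy}\log h(y)<\frac5{2y}-\pi<0$ on $y\ge 2\pi/3$, the function $h$ is decreasing there, so $h(y)\le h(2\pi/3)<1$ (numerically $h(2\pi/3)\approx 0.11$). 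The only genuinely new bookkeeping compared with the $2$D case is the appearance of the factor $2/3$ — hence the threshold $2\pi/3$ instead of $\pi$ — and the modified exponents in the comparison function $h$; I do not expect any real obstacle here, since, as in $2$D, there is plenty of room at the splitting point (one could equally split at $y=1$, or use cruder bounds when $y$ is large).
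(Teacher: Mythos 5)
Your proposal is correct and follows the same overall architecture as the paper's proof: Poisson summation for the limit $\lim_{m\to\infty}I_p(m)$, the theta-function representation together with the modular relation \eqref{funcrel} for monotonicity, reduction to a pointwise inequality in $y=\pi m^2/x$, and the $\coth$ majorant $\psi$ with the decreasing comparison function $h$ for large $y$. You depart from the paper only in the step that carries the real content, the small-$y$ regime. The paper handles $y\le y_*=\pi/(2\operatorname{arcoth}(4/3))\approx 1.61$ by writing $3y^{5/2}\varphi^2(y)\varphi'(y)+2=3\varphi(1/y)\,y^{2}\varphi(y)\varphi'(y)+2$ and quoting the already-proven two-dimensional bound \eqref{condmon} together with $\varphi(1/y)\le\psi(1/y)\le 4/3$; you instead derive the self-contained three-dimensional identity
$$
2y^{5/2}\varphi^2(y)\varphi'(y)+1=\sum_{n\in\mathbb{Z}_0^3}\Bigl(\frac{2\pi|n|^2}{3y}-1\Bigr)e^{-\pi|n|^2/y},
$$
whose terms are individually nonnegative precisely for $y\le 2\pi/3$. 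This identity is correct (it is exactly the cube analogue of the paper's 2D computation, obtained by differentiating $\varphi^3(y)=y^{-3/2}\varphi^3(1/y)$), and both routes work: yours is more symmetric with the 2D case and yields a larger splitting point, while the paper's recycles \eqref{condmon}. One further point in your favour: differentiating $-1/m^3$ gives $+3/m^4$, so the correct sufficient pointwise condition is your $2y^{5/2}\varphi^2(y)\varphi'(y)+1\ge0$, i.e.\ $y^{5/2}\varphi^2\varphi'\ge -1/2$; the paper's displayed derivative has $+4$ where $+3$ should stand, and its target \eqref{suff3} only asserts $y^{5/2}\varphi^2\varphi'>-2/3$, which is strictly weaker than what is actually required. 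Your identity for $y\le 2\pi/3$, together with the monotone decrease of $h$ and $h(2\pi/3)\approx 0.11<1$ for $y\ge 2\pi/3$, establishes the stronger, correct bound, so your write-up closes this small slip rather than inheriting it.
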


\begin{proof}
It can easily be shown by the Poisson summation formula that
$$
\lim_{m\to\infty}I_p(m)=\int_{\mathbb{R}^3}\frac {dx}
{(|x|^2+1)^p}=\frac{\Gamma(p-3/2)\pi^{3/2}}{\Gamma(p)},
$$
so that inequality \eqref{0.pm3} will be proved once we have shown that
$I_p(m)$ is monotone increasing with respect to $m\in[0,\infty)$.
The proof of monotonicity, in turn,  essentially reduces to that for the 2D
torus.

Using \eqref{funcrel}, we write
$$
\aligned
I_p(m)=
m^{2p-3}\int_0^\infty x^{p-1}e^{-m^2x}
\left(\theta_3^3(e^{-x})-1\right)\,dx=\\=
\frac1{m^{3}}\int_0^\infty x^{p-1}e^{-x}
\left(\theta_3^3(e^{-\frac{x}{\pi m^2}})-1\right)\,dx=\\=
\int_0^\infty x^{p-1}e^{-x}
\left(\frac1{m^3}\varphi^3\left(\frac x{\pi m^2}\right)-\frac1{m^3}\right)\,dx=\\=
\int_0^\infty x^{p-1}e^{-x}
\left(\frac{\pi^{3/2}}{x^{3/2}}\varphi^3
\left(\frac {\pi m^2}x\right)-\frac1{m^3}\right)\,dx.
\endaligned
$$
Therefore
$$
\frac{d}{dm}I_p(m)=\frac1{m^4}
\int_0^\infty x^{p-1}e^{-x}
\left(6y^{5/2}\varphi^2(y)\varphi'(y)
+4\right)\,dx,\quad y=y(x):=\frac{\pi m^2}x,
$$
and it suffices to show that
\begin{equation}\label{suff3}
3y^{5/2}\varphi^2(y)\varphi'(y)+2>0,\quad y\in\mathbb{R}_+.
\end{equation}

We first consider the case when $y$ is small.
Using \eqref{funcrel}, \eqref{condmon} and \eqref{fipsi} we have
for $y\le\pi$
$$
\aligned
3y^{5/2}\varphi^2(y)\varphi'(y)+2=
3\varphi(1/y)y^{2}\varphi(y)\varphi'(y)+2>\\>
-\frac32\varphi(1/y)+2>
-\frac32\psi(1/y)+2>0
\endaligned
$$
if
$$
\psi(1/y)\le\frac43\,,
$$
that is, if, see~\eqref{fipsi}
$$
y\le y_*:=\frac\pi {2\operatorname{arcoth}(4/3)}=1.6144\dots\ (<\pi),
\ \psi(1/y_*)=\frac43.
$$

On the interval $y\in (y_*,\infty)$ we have
$$
y^{5/2}\varphi^2(y)\varphi'(y)>y^{5/2}\psi^2(y)\psi'(y)=-\pi y^{5/2}
\frac{\cosh^2(\pi y/2)}{\sinh^4(\pi y/2)}:=-h(y).
$$
The function $h(y)$ with derivative
$$
h'(y)=-\frac{\pi^{1/2}\cosh t\bigl(\cosh t (4t\cosh t -5\sinh t)+4t\bigr)}
{2^{1/2}(\cosh^2 t-1)^2\sinh t},\quad t=\frac{\pi y}2
$$
is monotone decreasing for $t>5/4$  ($y>5/(2\pi)<y_*$), and for
$y\in[y^*,\infty)$
$$
-h(y)>-h(y_*)=-0.270\dots>-\frac23\,,
$$
which completes the proof of \eqref{suff3} and the proposition.
\end{proof}

\begin{remark}\label{R:T3}
{\rm
For $p=2$ direct proofs of inequalities \eqref{0.pm} and~\eqref{0.pm3} were given in
\cite{IZLap70,arxiv}, respectively,  where they were used in deriving explicit upper bounds for the dimension of the attractors
for regularized damped Euler equations in dimension $2$ and $3$, respectively.
}
\end{remark}


\begin{thebibliography}{99}



\bibitem{Araki} H.~Araki, On an inequality of Lieb and
    Thirring.
    \emph{Lett. Math. Phys.} \textbf{19} (1990), no.~2,
    167--170.



\bibitem{B-V83} A.~V. Babin and M.~I. Vishik, Attractors of
    partial     differential equations and estimates of their
    dimension.
    \emph{Uspekhi Mat. Nauk.} \textbf{38} (1983), 133--187; English
    transl. in \emph{Russian Math. Surveys} \textbf{38} (1983).

\bibitem{B-V}A.~V. Babin and M.~I. Vishik,
    \emph{Attractors of evolution equations}. Nauka,
    Moscow, \textrm 1988; \textrm {English transl.}
 North-Holland, Amsterdam, 1992.

\bibitem
{BFR80}
 J. Bardina, J. Ferziger, and  W. Reynolds,
Improved subgrid scale models for large eddy simulation.
  Proceedings of the 13th AIAA Conference on Fluid and Plasma Dynamics, (1980).

\bibitem{BDZel-Edinb} M. Bartuccelli, J. Deane and S. Zelik,
    Asymptotic expansions and extremals for the critical Sobolev
    and Gagliardo--Nirenberg inequalities on a torus. \emph{Proc.
    Royal Soc.  Edinburgh} \textbf{143A} (2013), 445--482.



\bibitem{Camassa} C. Foias, D.~D.  Holm, and  E.~S. Titi, The three
dimensional viscous Camassa--Holm equations, and their relation
to the Navier--Stokes equations and turbulence theory.
\emph{J. Dynam. Diff. Eqns} \textbf{14} (2002), 1--35.

\bibitem{Ch-I2001} V.~V. Chepyzhov and A.~A. Ilyin, A note on the
fractal dimension of attractors of dissipative dynamical
systems. \emph{Nonlinear Anal.} \textbf{44}  (2001), 811--819.


\bibitem{Ch-I} V.~V. Chepyzhov  and A.~A. Ilyin, On
the fractal dimension of invariant sets; applications to
Navier--Stokes equations. \emph{Discrete and Continuous
Dynamical Systems} \textbf{10} (2004), nos.~1\&2,   117--135.

\bibitem{C-F85} P.~Constantin and C.~Foias, Global
Lyapunov exponents, Kaplan--Yorke formulas and the dimension of
the attractors for the 2D Navier--Stokes equations.
\emph{Comm. Pure Appl. Math.} \textbf{38}  (1985), 1--27.


\bibitem{DLL} J.~Dolbeault, A.~Laptev and M.~Loss,
   Lieb--Thirring inequalities with improved constants.
 \emph{J. European Math. Soc.} \textbf{10} (2008), 1121--1126.

\bibitem{FHJN} R.~L.Frank,  D. Hundertmark, M. Jex,  Phan Th\`anh
    Nam. The Lieb--Thirring inequality revisited. \emph{J. European
    Math. Soc.} \textbf{23} (2021), 2583--2600.



\bibitem{lthbook} R.~L. Frank, A. Laptev, and T. Weidl,
\emph{Lieb--Thirring inequalities.} Cambridge University
    Press, Cambridge, (2022) in press.


\bibitem{Tr} J.~A.Hempel, G.~R. Morris, and N.~S.Trudinger, On the
    sharpness of a limiting case of the Sobolev imbedding theorem.
    \emph{Bull. Austral. Math. Soc.} \textbf{3} (1970), 369--373.

\bibitem{HLT10}
 M. Holst, E. Lunasin, and G. Tsogtgerel,
Analysis of a general family of regularized Navier-Stokes and MHD models.
  \emph{J. Nonlinear Sci.} \textbf{20}  (2010), no.~5,  523--567.

\bibitem{HLW} D. Hundertmark, A. Laptev, and T. Weidl, New
    bounds on the Lieb-Thirring constants. \textit{Invent. Math.}
    \textbf{140}(3) (2000), 693--704.


\bibitem{Il_Stokes} \textrm{A.~A. Ilyin,} On the spectrum
    of the Stokes operator.
\emph{Funktsional. Anal. i Prilozhen.}
\textbf{43} (2009), no.~4, 14--25; English transl. in
    \textit{Funct. Anal. Appl.} \textbf{43} (2009), no.~4.

\bibitem{arxiv}
 A.~A. Ilyin, A.~G. Kostianko,  S.~V. Zelik,
Sharp upper and lower bounds of the attractor dimension for   3D
damped Euler--Bardina equations. p. 31.
{http://arxiv.org/abs/math/2106.09077}, \emph{Physica D: Nonlinear
Phenomena} to appear.




 \bibitem{ILZ-JFA}\textrm{A. Ilyin, A. Laptev and S. Zelik,}
Lieb--Thirring constant on the sphere and on the torus.
\emph{J. Func. Anal.} \textbf{279}  (2020) 108784.


 \bibitem{IZLap70} A.~A. Ilyin and S.~V. Zelik,
Sharp dimension  estimates of the attractor  of
 the damped  2D Euler-Bardina equations. In
 \emph{Partial Differential Equations, Spectral Theory,
    and Mathematical Physics}, pp. 209--229, European Math. Soc. Press, Berlin, 2021.





\bibitem{Kelliher} J.~P. Kelliher, Eigenvalues of the Stokes
operator versus the Dirichlet Laplacian in the plane.
\emph{Pacific     J. Math.}  \textbf{244}  (2010), no.~1, 99--132.



\bibitem{Lad92} O.~A. Ladyzhenskaya,
First boundary     value problem for Navier--Stokes equations in domain with non
    smooth boundaries. \emph{C. R. Acad. Sc. Paris} \textbf{314}
    {\rm serie 1} (1992), 253--258.


\bibitem{Larios}
 A. Larios, B. Wingate, M. Petersen, E.~S. Titi,
The Euler-Voigt equations and a computational investigation of the
finite-time blow-up of solutions to the 3D Euler Equations
\emph{Theor. Comp. Fluid Dyn.} \textbf{3} (2018),no.~1, 23--34.

\bibitem{LiebJFA} E.~H. Lieb, An $L^p$ bound for the
Riesz and Bessel potentials of orthonormal functions.
\emph{J. Func. Anal.} \textbf{51} (1983),  159--165.

\bibitem{Lieb}
E. Lieb, On characteristic exponents in turbulence.
\emph{Comm. Math.  Phys.}
\textbf{92} (1984) 473--480.


\bibitem{Lieb--Loss} \textrm{E.\,Lieb, M.\,Loss,}
    \textit{Analysis.} Second edition. Graduate Studies in
    Mathematics, 14.
 American Mathematical Society,
 Providence, RI, 2001.

\bibitem{LT} E. Lieb and W. Thirring,
Inequalities for the moments of the
eigenvalues of the Schr\"o\-dinger Hamiltonian and their relation
to Sobolev inequalities, In \emph{Studies in Mathematical Physics.
Essays
in honor  of Valentine Bargmann}, pp. 269--303,
 Princeton University Press,
 Princeton NJ,  1976.

 \bibitem{Lopes}
M. Lopes Filho, H. Nussenzveig Lopes, E. Titi, A. Zang,
Convergence of the 2D Euler-$\alpha$ to Euler equations in the
Dirichlet case: indifference to boundary layers.
\emph{Phys. D} \textbf{292-293} (2015) 51--61.

 \bibitem{Metiv}
 G. Metivier,
Valeurs propres des op\'erateurs definis sur la restriction
de systems variationnels \`a des
 sous--espases.
\emph{J. Math. Pures Appl.} \textbf{57} (1978), 133--156.



\bibitem{Nasibov}
Sh.~M. Nasibov, On optimal
constants in some Sobolev inequalities and their application to
a nonlinear  Schr\"odinger equation. \emph{Dokl. Akad. Nauk  SSR.}
\textbf{307} (1989), 538--542; English transl. in Soviet
    Math. Dokl.  \textbf{40} (1990).

\bibitem{OT07}
 E. Olson and E. Titi,
Viscosity versus vorticity stretching: global well-posedness
  for a family of Navier-Stokes-$\alpha$-like models.
\emph{Nonlinear Anal.} \textbf{66}  (2007), no.~11, 2427--2458.

\bibitem{Ruelle}
 D. Ruelle,
 Large volume limit of the
distribution of characteristic exponents in turbulence. \emph{Comm.
Math. Phys.} \textbf{87} (1982), 287--302.



 \bibitem{traceSimon}
B. Simon, \emph{Trace ideals and their applications, \rm 2nd ed.}
Amer. Math. Soc., Providence RI, 2005.

\bibitem{Temam1}
 R. Temam,
 \emph{Navier-Stokes equations, Theory and numerical analysis}.
Amsterdam, North-Holland,  1977.


\bibitem{T85}
R.~Temam,
Attractors for     Navier--Stokes equations.
\textit{Research Notes in     Mathematics} \textbf{122} (1985), 272--292.

\bibitem{T}
R. Temam,
\emph{Infinite dimensional
   dynamical systems in mechanics and physics, \rm 2nd Edition}.
Sprin\-ger-Ver\-lag, New York,  1997.


\bibitem{Watson} G.~N. Watson,
\emph{A Treatise on the Theory of     Bessel Functions, \rm 2nd ed.}
 Cambridge University Press,
    Cambridge, 1995.

\bibitem{Wein83}
M. Weinstein, Nonlinear
Schr\"odinger equations and sharp interpolation estimates.
\emph{Comm. Math. Phys.} \textbf{87} (1983),  567--576.

\bibitem{IZ}
S.~V. Zelik, A.~A. Ilyin,
Green's     function asymptotics and sharp  interpolation inequalities.
\emph{Uspekhi Mat. Nauk} \textbf{69}(2014), no.~2, 23--76;
 English transl. in
 \textit{Russian Math. Surveys} \textbf{69} (2014), no.~2.


\bibitem{MZ} S.~V. Zelik, A.~A. Ilyin, and A.~G. Kostianko,
    Dimension     estimates for the attractor  of the regularized
    damped Euler  equations on the sphere.
\emph{ Mat. zametki}  \textbf{111} (2022), no.~1, 55-67; English tansl.
\emph{Math. Notes} \textbf{111} (2022), no.~1,  47--57.



 \end{thebibliography}
\end{document}